\newtheorem{lemma}{Lemma}[section]
\newtheorem{theorem}[lemma]{Theorem}
\newtheorem*{theorem*}{Theorem}
\newtheorem{corollary}[lemma]{Corollary}
\newtheorem{proposition}[lemma]{Proposition}
\newtheorem*{proposition*}{Proposition}
\newtheorem*{problem*}{Problem}
\theoremstyle{definition}
\newtheorem*{claim*}{Claim}
\newtheorem*{definition}{Definition}
\newtheorem*{remark}{Remark}
\newtheorem*{remarks}{Remarks}
\newcommand{\E}{{\mathbb E}}
\newcommand{\N}{{\mathbb N}}
\newcommand{\Q}{{\mathbb Q}}
\newcommand{\R}{{\mathbb R}}
\newcommand{\T}{{\mathbb T}}
\newcommand{\Z}{{\mathbb Z}}
\newcommand{\CC}{{\mathcal C}}
\newcommand{\CF}{{\mathcal F}}
\newcommand{\CI}{{\mathcal I}}
\newcommand{\CK}{{\mathcal K}}
\newcommand{\CX}{{\mathcal X}}
\newcommand{\CY}{{\mathcal Y}}
\newcommand{\CZ}{{\mathcal Z}}
\newcommand{\bN}{{\mathbf{N}}}
\newcommand{\ve}{\varepsilon}
\newcommand{\wt}{\widetilde}
\newcommand{\e}{\mathrm{e}}
\newcommand{\one}{\mathbf{1}}
\newcommand{\norm}[1]{\left\Vert #1\right\Vert}
\newcommand{\nnorm}[1]{\lvert\!|\!| #1|\!|\!\rvert}
\newcommand{\inv}{^{-1}}
\DeclareMathOperator{\id}{id}
\newcommand{\Krat}{{\CK_{\text{\rm rat}}}}
\begin{document}
	
	\title[Multiple recurrence and convergence without commutativity]{Multiple recurrence and convergence without commutativity} 

	\thanks{The first author was supported  by the Research Grant - ELIDEK HFRI-FM17-1684.}
\author{Nikos Frantzikinakis}
\address[Nikos Frantzikinakis]{University of Crete, Department of mathematics and applied mathematics, Voutes University Campus, Heraklion 71003, Greece} \email{frantzikinakis@gmail.com}
\author{Bernard Host}
\address[Bernard Host]{
	LAMA, Univ Gustave Eiffel, CNRS, F-77447 Marne-la-Vall\'ee, France}
\email{bernard.host@u-pem.fr}
	\begin{abstract}
 	We establish multiple recurrence and convergence results for pairs of zero entropy measure preserving transformations that do not satisfy any commutativity assumptions. Our results  cover   the case where the iterates of the  two transformations  are $n$ and $n^k$ respectively, where $k\geq 2$, and the case $k=1$ remains an open problem.  Our starting point
 	is based on the observation  that  Furstenberg systems of sequences of the form $(f(T^{n^k}x))$ have very special structural properties when  $k\geq 2$. We  use these properties and some  disjointness arguments in order to get characteristic factors  with nilpotent structure for the corresponding ergodic averages, and then finish the proof using some  equidistribution results on nilmanifolds.
	\end{abstract}

\subjclass[2020]{Primary: 37A30; Secondary: 37A44, 28D05.}

\keywords{Multiple recurrence, mean convergence, ergodic averages}
\maketitle

\section{Introduction and main results}
 A well known multiple recurrence  result of Furstenberg and Katznelson \cite{FuK79} states that if $T,S$ are commuting,  measure preserving transformations,  acting on a probability space
 $(X,\CX,\mu)$, then  for every set $A\in \CX$ with $\mu(A)>0$, there exists $n\in \N$ such that
 \begin{equation}\label{E:TSrec}
 \mu(A\cap T^{-n}A\cap S^{-n}A)>0.
 \end{equation}
 This result was extended by Bergelson and Leibman to cover iterates given by  arbitrary integer polynomials with zero constant terms \cite{BL96},  and the case where
 $T$ and $S$ generate a nilpotent group \cite{BL02}.   For $f,g\in L^\infty(\mu)$, the existence in $L^2(\mu)$ of the limit
 $$
\lim_{N\to\infty}\frac 1N\sum_{n=1}^NT^nf \cdot S^ng
$$
was established in the commutative case by Conze and Lesigne~\cite{CL}, and the extension to any number of transformations spanning a nilpotent group and to polynomial iterates was established by Walsh~\cite{W}.
  The goal of this article is to  study similar recurrence  phenomena and  mean convergence results for pairs of measure preserving transformations that do not satisfy any commutativity assumptions.

 We  caution the reader at this point that a simple example of Furstenberg \cite[Page~40]{Fu} shows that if $T$ and $S$ do not satisfy any commutativity assumptions, then the recurrence property  \eqref{E:TSrec} may  fail in general.
 Indeed, if $T$ is the $(1/2,1/2)$-Bernoulli shift on the sequence space $X=\{0,1\}^\Z$, and $R$ is the transformation on $X$ that fixes the  $0^\text{th}$-coordinate and  flips the $0$'s and the $1$'s in all  other  coordinates, then for
 $$
 A:=\{x\in \{0,1\}^\Z\colon x(0)=1\} \ \text{ and } \  S:=R^{-1}TR,
 $$ an easy computation shows that $\mu(T^{-n}A\cap S^{-n}A)=0$ for every $n\in\N$.
  Other examples, which also cover non-convergence results and different iterates,  are given in~\cite[Example~7.1]{Be85},  \cite{BL04}, and \cite[Lemma~4.1]{FLW11}. Furthermore,   Proposition~\ref{th:entropy} below shows that this situation is fairly general.
%
%
%
So in the absence of additional assumptions, there is no hope for  recurrence and convergence results
that apply to pairs of general, not necessarily commuting, measure preserving  transformations $T,S$.

\subsection{Results}
On the positive side, it was recently established  in \cite{F21}  that if the transformation $T$ has zero entropy (in the above mentioned  counterexamples $T$ and $S$ have positive entropy), then for any positive non-integer $a$ we have for all sets $A\in \CX$ with $\mu(A)>0$ that there exists $n\in\N$ such that
 \begin{equation}\label{E:arec}
  \mu(A\cap T^{-n}A\cap S^{-[n^{a}]}A)>0.
 \end{equation}
 It was also established in  \cite{F21} that  the corresponding multiple ergodic averages
 \begin{equation}\label{E:aconv}
 \frac{1}{N}\sum_{n=1}^N T^nf \cdot  S^{[n^a]}g
\end{equation}
converge in $L^2(\mu)$ as $N\to\infty$ and a simple formula for the limit was obtained.
 The method of \cite{F21} makes essential use of the fact that the power of $n$ in the second iterate  is non-integral;  this  hypothesis gives  access to properties of strongly stationary systems that the so called  ``Furstenberg systems'' of sequences of the form $(g(S^{[n^a]}x))$ satisfy. Such information is lost when $a$ is integral, and it was left as an open problem (see \cite[Problem~2]{F21}) whether the recurrence property \eqref{E:arec}, or the mean convergence of the averages \eqref{E:aconv},
 still holds when  $a\in \N$. The main objective of this article is  to give a positive answer to these questions  when $a\in \N$ is different than $1$. The case where  $a=1$ remains open (see Section~\ref{SS:problems} for additional related problems).

\begin{theorem}
\label{th:convergence}
Let $T,S$ be measure preserving transformations acting on a probability space $(X,\CX,\mu)$  such that the system $(X,\mu,T)$ has zero entropy.  Let also   $p\in \Z[t]$ be a polynomial with $\deg(p)\geq 2$. Then
for every    $f,g\in L^\infty(\mu)$ the limit
\begin{equation}
\lim_{N\to\infty} \frac{1}{N}\sum_{n=1}^N T^nf \cdot  S^{p(n)}g
\label{eq:lim-main}
\end{equation}
exists in  $L^2(\mu)$.
\end{theorem}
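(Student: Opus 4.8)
The plan is to show that the averages in \eqref{eq:lim-main} form a Cauchy sequence in $L^2(\mu)$ by analysing their correlations through the Furstenberg systems attached to the sequences $(T^nx)_n$ and $(S^{p(n)}x)_n$, and then exploiting the disparity between the linear iterate of $T$ and the higher-degree iterate of $S$. Throughout, the two structural facts driving the argument are the zero entropy of $(X,\mu,T)$ and the special form of the Furstenberg systems of the $S$-side that is available precisely because $\deg(p)\ge 2$.

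First I would apply the van der Corput inequality to $u_n:=T^nf\cdot S^{p(n)}g$, which reduces $L^2$-convergence to controlling, for fixed $h$, the averages $\frac1N\sum_{n=1}^N\int T^nF_h\cdot S^{p(n+h)}g\cdot\overline{S^{p(n)}g}\,d\mu$, where $F_h:=T^hf\cdot\overline{f}$ and the exponent difference $p(n+h)-p(n)$ is a polynomial of degree $\deg(p)-1\ge 1$. Iterating this step in a PET-type scheme progressively lowers the complexity on the $S$-side, the net effect being to recast the problem as a question about the joint distribution of the two orbits, equivalently about a single measure preserving system carrying the limiting empirical distributions of $(T^nx)_n$ and $(S^{p(n)}x)_n$.

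The decisive input, and the reason the hypothesis $\deg(p)\ge 2$ is essential, is the observation highlighted in the abstract: when the exponent polynomial has degree at least $2$, the Furstenberg systems of the sequences $(g(S^{p(n)}x))$ possess very special structural properties, which play the role that strong stationarity plays in the non-integer exponent case of \cite{F21} and which are no longer accessible there for integer exponents. Informally, since the gaps $p(n+h)-p(n)$ grow with $n$, the limiting correlations of the $S$-side are governed by a system of highly restricted type. I would then bring in the zero entropy of $(X,\mu,T)$ by a disjointness argument in the spirit of Furstenberg: the deterministic $T$-system is disjoint from the part of the $S$-side carrying no usable structure, and this forces the limit to depend only on the projection of $f$ onto a characteristic factor of $(X,\mu,T)$ with nilpotent structure, namely an inverse limit of nilsystems.

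Having reduced to such a characteristic factor, the final step is to replace $f$ by its projection onto a nilfactor and $g$ by the corresponding conditional expectation, so that the averages become averages of a polynomial orbit of the form $n\mapsto\bigl(T^nx,\,S^{p(n)}y\bigr)$ on (a product of) nilmanifolds; existence of the limit then follows from Leibman's equidistribution theorem for polynomial sequences on nilmanifolds. The step I expect to be the main obstacle is the disjointness/characteristic-factor reduction: because $T$ and $S$ are \emph{not} assumed to commute, the standard Host--Kra and Conze--Lesigne machinery for commuting transformations is unavailable, and the nilpotent structure must instead be extracted directly from the special structure of the $S$-side together with the zero entropy of $T$, while controlling carefully how the two systems are coupled in the joint limit.
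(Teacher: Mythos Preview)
Your overall strategy --- exploit the special structure of the Furstenberg systems of $(g(S^{p(n)}x))$, combine with the zero entropy of $T$ via a disjointness argument, and finish with nil\-structure --- is the paper's strategy. But the execution has two misdirections and omits the tool that actually closes the argument.

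First, the van der Corput/PET opening is a detour. After one application you face averages of $\int T^nF_h\cdot S^{p(n+h)}g\cdot\overline{S^{p(n)}g}\,d\mu$, with one $T$-iterate and two $S$-iterates of the \emph{same} top degree; in the absence of commutativity there is no PET scheme that lowers this complexity, and the paper does not apply van der Corput to the averages~\eqref{eq:lim-main} at all.

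Second, the conclusion you draw from disjointness points in the wrong direction. The precise structural input (Proposition~\ref{P:PolynomialIterates}, which rests on Theorem~\ref{T:HK}) is: if $\E(g\mid\CZ_\infty(S))=0$, then in the Furstenberg system of $(g(S^{p(n)}x))$ the $0^{\text{th}}$-coordinate function has zero conditional expectation on the Pinsker factor. Since the Furstenberg system of $(f(T^nx))$ has zero entropy, Proposition~\ref{prop:Pinsker} applied to any joining of the two systems kills the correlation (this is Proposition~\ref{prop:char3}). Thus disjointness reduces $g$ to $\CZ_\infty(S)$; it does \emph{not} produce a reduction of $f$ to a nilfactor of $T$, as you state.

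Third, once $g$ is reduced to $\CZ_k(S)$ and (via \cite[Proposition~3.1]{CFH}) approximated so that $(g(S^nx))$, hence $(g(S^{p(n)}x))$, is a nilsequence for a.e.\ $x$, the paper does not invoke Leibman on a product of nilmanifolds. It applies the Nilsequence Wiener--Wintner Theorem~\ref{th:WW} to $(f(T^nx))$: there is a single full-measure set $X_0$, depending only on $f$, on which $\frac1N\sum_{n\le N}\Psi(n)\,f(T^nx)$ converges for \emph{every} nilsequence $\Psi$. This uniformity is exactly what is needed, because the nilsequence $\Psi_x(n)=g(S^{p(n)}x)$ varies with $x$. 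Your alternative --- reduce $f$ to a nilfactor as well and then use Leibman on a product --- would first require showing that the limit is $0$ when $\E(f\mid\CZ_\infty(T))=0$; the disjointness argument does not give this, and in the paper it is obtained precisely from the second clause of Theorem~\ref{th:WW}. So your final step presupposes the tool it is meant to replace.
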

The rational Kronecker factor   $\Krat(T)$ is the factor spanned by eigenfunctions associated to rational eigenvalues of $T$. In the case where $T=S$ and $p(n)=n^2$, Furstenberg and Weiss~\cite{FW} proved that this factor is characteristic, in the sense of~\cite{FW},  for mean convergence of
the averages \eqref{eq:lim-main}, a result that was later extended to the case of commuting transformations in
\cite{CFH}. In the context of Theorem~\ref{th:convergence},   the methods used in these articles do not allow us to get similar characteristic factors for the averages \eqref{eq:lim-main}. We use  a different approach to establish the following result.
\begin{theorem}
\label{prop:Rationa-Kro}
Under the hypothesis of Theorem~\ref{th:convergence}, the limit~\eqref{eq:lim-main} in $L^2(\mu)$ is equal to $0$
if either $\E(f|\Krat(T))=0$
or $\E(g|\Krat(S))=0$.
\end{theorem}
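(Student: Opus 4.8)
The plan is to show that the two asserted vanishing statements reflect that $\Krat(T)$ (resp.\ $\Krat(S)$) is a characteristic factor for the averages \eqref{eq:lim-main}, and to isolate the precise point where the hypothesis $\deg(p)\ge 2$ is used. Since Theorem~\ref{th:convergence} already gives the existence of the limit $L\in L^2(\mu)$, it suffices to prove that $\langle L,\psi\rangle=0$ for all $\psi\in L^\infty(\mu)$, equivalently that the scalar averages $\frac1N\sum_{n=1}^N\int T^nf\cdot S^{p(n)}g\cdot\overline\psi\,d\mu$ tend to $0$; everything thus reduces to controlling the correlation sequences $c(n)=\int T^nf\cdot S^{p(n)}g\cdot\overline\psi\,d\mu$. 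Because the two iterates are genuinely asymmetric (linear and zero entropy on the $T$ side, polynomial of degree $\ge 2$ and arbitrary on the $S$ side), I would treat the two orthogonality hypotheses separately rather than hope for a symmetric argument.

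For the hypothesis $\E(g\mid\Krat(S))=0$ I would run a van der Corput / PET reduction in the variable $n$. Writing $\|L\|^2$ as a $\limsup$ over $h$ of the $n$-averages of $\langle u_{n+h},u_n\rangle$ with $u_n=T^nf\cdot S^{p(n)}g$, each inner product expands as $\int T^n(T^hf\cdot\overline f)\cdot S^{p(n)}\bigl(S^{q_h(n)}g\cdot\overline g\bigr)\,d\mu$, where $q_h(n)=p(n+h)-p(n)$ has degree $\deg(p)-1$. Iterating the van der Corput step lowers the degree of the $S$-polynomial until it becomes linear; at that stage the $S$-orbit is sampled along an arithmetic progression, and reading the exponent modulo $q$ (using that $p(n)\bmod q$ is eventually periodic) turns the surviving contribution into a weighted single-transformation polynomial average, for which the rational Kronecker factor $\Krat(S)$ is known to be characteristic. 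The zero entropy of $T$ enters here through a disjointness argument: the deterministic weight produced by the $T$-orbit is disjoint from the structured (nilpotent) behaviour that governs the $S$-orbit along $(p(n))$, so it cannot resurrect the frequencies that equidistribution has cancelled.

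For the hypothesis $\E(f\mid\Krat(T))=0$ the mechanism is cleanest on eigenfunctions and explains where $\deg(p)\ge 2$ is indispensable. Decompose $f$ into its rational-eigenfunction part (which is $0$ by hypothesis), its irrational-eigenfunction part, and its continuous-spectrum part. The continuous-spectrum part contributes $0$ by a van der Corput estimate, since differencing produces the correlations $\langle T^hf,f\rangle$ whose Cesàro averages vanish. For an eigenfunction $f$ with $Tf=\mathrm{e}(\theta)f$ and $\theta$ irrational one has $T^nf=\mathrm{e}(n\theta)f$, so one must show $\frac1N\sum_n \mathrm{e}(n\theta)\,S^{p(n)}g\to 0$; testing against an $S$-eigenfunction component of $g$ with eigenvalue $\mathrm{e}(\beta)$ produces the phase $\mathrm{e}(n\theta+p(n)\beta)$, and because $\deg(p)\ge 2$ its top coefficient $a_d\beta$ must be rational for the average to survive, forcing $\beta\in\Q$ and hence $\theta\in\Q$, a contradiction; the non-eigenfunction part of $g$ is handled by the single-transformation polynomial theory as above. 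When $\deg(p)=1$ this forcing disappears, which is consistent with the case $a=1$ being open.

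The main obstacle is that, without commutativity, none of the standard characteristic-factor machinery (Host--Kra seminorms, Conze--Lesigne theory, joinings of the product system) is available for the pair $(T,S)$, so the two orbits cannot be decoupled by the usual means. The resolution, which I expect to be the technical heart of the argument, is to use the zero entropy of $T$ to realise the weight $(T^nf)$ as coming from a deterministic system that is disjoint from the Furstenberg systems of $(g(S^{p(n)}x))$ --- these being highly structured precisely because $\deg(p)\ge 2$ --- and then to transfer the problem to an equidistribution statement on a nilmanifold, where Leibman's theorem isolates exactly the rational-frequency (hence $\Krat$) contributions. Making the van der Corput / PET induction interact correctly with this disjointness, and checking that it terminates at linear exponents whose irrational part is annihilated, is the step I would expect to require the most care.
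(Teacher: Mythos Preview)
Your proposal has the right endgame (the nilmanifold equidistribution you mention in the last paragraph) but the two ``direct'' approaches you sketch before it both break down, and the paper in fact skips them entirely.

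\medskip

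\textbf{The continuous-spectrum step for $f$ is a genuine gap.} When you apply van der Corput to $u_n=T^nf\cdot S^{p(n)}g$, the inner product $\langle u_{n+h},u_n\rangle$ is
\[
\int T^n\bigl(T^hf\cdot\overline f\bigr)\cdot S^{p(n+h)}g\cdot\overline{S^{p(n)}g}\,d\mu,
\]
which still couples the $T$- and $S$-orbits; it does \emph{not} reduce to $\langle T^hf,f\rangle$. Even if you iterate PET $\deg(p)$ times to freeze the $S$-exponent, what survives is an average of $T^n$ applied to a multiplicative derivative $\prod_\epsilon T^{\epsilon\cdot\vec h}f^{\pm}$, and controlling this on average in $\vec h$ gives at best $\nnorm{f}_{k}$, i.e.\ it shows $\CZ_k(T)$ is characteristic, not $\Krat(T)$. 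Passing from $\CZ_k$ down to $\Krat$ is exactly the content of the nilmanifold equidistribution step (your last paragraph), and cannot be bypassed by spectral decomposition of $f$.

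\medskip

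\textbf{The PET scheme for the case $\E(g\mid\Krat(S))=0$ has the same defect.} After differencing, the $S$-function is a product of shifts of $g$ and $\overline g$, and orthogonality to $\Krat(S)$ does not pass to such products; so ``the rational Kronecker factor is known to be characteristic'' for the reduced average does not conclude anything about the original $g$.

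\medskip

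\textbf{What the paper actually does.} The route you describe in your final paragraph \emph{is} the proof, not a technical lemma supporting van der Corput. First one shows (Proposition~\ref{prop:main}) that $\CZ_\infty(T)$ and $\CZ_\infty(S)$ are characteristic, via the disjointness of the zero-entropy weight $(f(T^nx))$ from the Pinsker-orthogonal part of the Furstenberg system of $(g(S^{p(n)}x))$. This reduces both $f$ and $g$ to nilsequence data. Then, for the case $\E(f\mid\Krat(T))=0$, one proves a pointwise nilmanifold equidistribution statement (Proposition~\ref{P:mainnil}): if $(X,m_X,T)$ is an ergodic nilsystem, $f\in\CC(X)$ with $\E(f\mid\Krat(T))=0$, then for $m_X$-a.e.\ $x$ and \emph{every} nilsequence $\Psi$,
\[
\frac1N\sum_{n=1}^N f(T^nx)\,\Psi(p(n))\to 0.
\]
The hypothesis $\deg(p)\ge 2$ enters precisely here, through an elementary lemma (Lemma~\ref{L:PQR}): if $P$ has rationally independent non-constant coefficients and $Q$ is arbitrary, then $P+Q\circ p$ has an irrational non-constant coefficient. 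This is the rigorous version of your ``$a_d\beta$ rational forces $\beta$ rational'' heuristic, upgraded to handle the full affine-torus reduction of Leibman's criterion. The case $\E(g\mid\Krat(S))=0$ is then almost free: by the first case one may take $f$ to be a rational eigenfunction, and a direct spectral-theorem computation (essentially your eigenfunction argument, but with $\theta$ rational) finishes it.
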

 From this result it is an easy mater to deduce the following.
\begin{theorem}
\label{th:recurrence}
Let $T,S$ be measure preserving transformations acting on a probability space $(X,\CX,\mu)$  such that the system $(X,\mu,T)$ has zero entropy.
Let also   $p\in \Z[t]$ be a polynomial with $\deg(p)\geq 2$ and $p(0)=0$.
Then for every $A\in\CX$ and  $\ve>0$, the set
\begin{equation}\label{E:returns}
\{n\in\N\colon\mu(A\cap T^{-n}A\cap S^{-p(n)}A)\geq \mu(A)^3-\varepsilon\}
\end{equation}
has positive lower density.
\end{theorem}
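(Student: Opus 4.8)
The plan is to deduce Theorem~\ref{th:recurrence} from Theorem~\ref{prop:Rationa-Kro} by a by-now-standard argument that isolates the contribution of the rational Kronecker factors. Concretely, I would take $f=g=\one_A$ and study the averages $\frac1N\sum_{n=1}^N T^n\one_A\cdot S^{p(n)}\one_A$, whose limit exists in $L^2(\mu)$ by Theorem~\ref{th:convergence}. By Theorem~\ref{prop:Rationa-Kro}, replacing $\one_A$ in the first slot by its conditional expectation $\E(\one_A\mid\Krat(T))$ and in the second slot by $\E(\one_A\mid\Krat(S))$ changes the limit by $0$ in $L^2(\mu)$; indeed, writing $\one_A=\E(\one_A\mid\Krat(T))+h$ with $\E(h\mid\Krat(T))=0$ and similarly for $S$, every cross term vanishes by Theorem~\ref{prop:Rationa-Kro}. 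Hence the original limit equals the limit of $\frac1N\sum_{n=1}^N T^n\E(\one_A\mid\Krat(T))\cdot S^{p(n)}\E(\one_A\mid\Krat(S))$.

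The next step is to estimate the quantity $\mu(A\cap T^{-n}A\cap S^{-p(n)}A)=\int \one_A\cdot T^n\one_A\cdot S^{p(n)}\one_A\,d\mu$ from below on average. I would first integrate against $\one_A$; since $\E(\one_A\mid\Krat(T))$ and $\E(\one_A\mid\Krat(S))$ are combinations of rational eigenfunctions, the iterates $T^n\E(\one_A\mid\Krat(T))$ and $S^{p(n)}\E(\one_A\mid\Krat(S))$ are \emph{periodic} in $n$ (the first with some period $q_1$, the second with period $q_2$, using that $p$ has integer coefficients and $p(0)=0$ so $p(n)$ is eventually periodic modulo $q_2$). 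Along the common period $Q=\mathrm{lcm}(q_1,q_2)$, and in particular along $n\equiv 0\pmod Q$ where both iterates return to $\E(\one_A\mid\Krat(T))$ and $\E(\one_A\mid\Krat(S))$ respectively, the integrand becomes $\int \one_A\cdot \E(\one_A\mid\Krat(T))\cdot \E(\one_A\mid\Krat(S))\,d\mu$. Using $\int \one_A\cdot \E(\one_A\mid\Krat(T))\,d\mu=\int \E(\one_A\mid\Krat(T))^2\,d\mu\geq\big(\int\one_A\,d\mu\big)^2=\mu(A)^2$ by the Cauchy--Schwarz / conditional Jensen inequality, and a further application to pull out the second factor, one obtains a lower bound of the shape $\mu(A)^3$ for the limiting value of these averages.

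To pass from an average lower bound to a lower-density statement for the set in~\eqref{E:returns}, I would argue as follows. The $L^2$-limit of the averages, tested against $\one_A$, gives a Ces\`aro limit $L$ of the scalar sequence $c_n:=\mu(A\cap T^{-n}A\cap S^{-p(n)}A)$ satisfying $L\geq\mu(A)^3$. Since $0\leq c_n\leq 1$, an elementary argument shows that for every $\ve>0$ the set $\{n\colon c_n\geq L-\ve\}\supseteq\{n\colon c_n\geq\mu(A)^3-\ve\}$ has positive lower density: if it did not, then $c_n<L-\ve$ outside a density-zero set, forcing $\limsup$ of the averages to be at most $L-\ve\cdot\delta<L$ for the relevant density $\delta$, a contradiction. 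I expect the main technical point to be the careful bookkeeping in the second paragraph—verifying the periodicity of the rational-Kronecker iterates and correctly chaining the two Cauchy--Schwarz estimates to extract the exponent $3$—while the reduction in the first paragraph and the density extraction in the last are routine. One mild subtlety worth flagging is ensuring the $L^2$-convergence of Theorem~\ref{th:convergence} transfers to Ces\`aro convergence of the scalar sequence $c_n$ after integrating against the fixed function $\one_A$, which follows immediately since integration against $\one_A\in L^2(\mu)$ is continuous.
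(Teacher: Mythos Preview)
Your overall strategy matches the paper's, but the second paragraph contains a genuine gap that breaks the argument as written.

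The functions $\E(\one_A\mid\Krat(T))$ and $\E(\one_A\mid\Krat(S))$ are \emph{not} in general finite linear combinations of rational eigenfunctions; they lie only in the $L^2$-closure of such combinations (think of an odometer, where $\Krat$ is the whole space). Hence the sequences $n\mapsto T^n\E(\one_A\mid\Krat(T))$ and $n\mapsto S^{p(n)}\E(\one_A\mid\Krat(S))$ need not be periodic, and your period $Q$ does not exist. The paper fixes this by approximating $\E_\mu(\one_A\mid\Krat(T))$ in $L^1$ by $\E_\mu(\one_A\mid\CI(T^r))$ for a suitably large $r$ (using $\Krat(T)=\bigvee_d\CI(T^{d!})$), and similarly for $S$; this buys genuine $T^r$- and $S^r$-invariance at the cost of an $\ve/4$ error, and $p(0)=0$ ensures $r\mid p(rn)$ so that $S^{p(rn)}$ fixes $\E_\mu(\one_A\mid\CI(S^r))$.

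This repair forces a second correction you did not make. Theorem~\ref{prop:Rationa-Kro} equates Ces\`aro limits, not individual terms, so even granting periodicity, knowing the replaced integrand is $\geq\mu(A)^3$ only along $n\in Q\N$ does \emph{not} give $L\geq\mu(A)^3$ for the full Ces\`aro limit (the other residue classes could contribute nearly zero). The paper instead applies Theorem~\ref{prop:Rationa-Kro} directly to the subsampled averages, i.e.\ with $T^r$ in place of $T$ and $p(rn)$ in place of $p(n)$ (the hypotheses are preserved: $T^r$ still has zero entropy and $\deg p(r\,\cdot)\geq 2$). This yields
\[
\lim_{N\to\infty}\frac1N\sum_{n=1}^N\mu(A\cap T^{-rn}A\cap S^{-p(rn)}A)\geq \mu(A)^3-\ve/2,
\]
from which positive lower density of~\eqref{E:returns} is immediate without your final density-extraction paragraph. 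As a minor point, the inequality $\int \one_A\cdot\E(\one_A\mid\CB_1)\cdot\E(\one_A\mid\CB_2)\,d\mu\geq\mu(A)^3$ that you sketch via two applications of Cauchy--Schwarz is not quite as automatic as you suggest; the paper invokes it as \cite[Lemma~1.6]{Chu11}.
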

\begin{remarks}
$\bullet$ Even proving that
$\mu(A\cap T^{-n}A\cap S^{-p(n)}A)>0$ for some $n\in \N$  is not much easier than the stronger statement mentioned above.

$\bullet$ It is possible to prove that the set \eqref{E:returns} is syndetic but we do not address this problem here as this would complicate our statements and proofs.
\end{remarks}
When $T=S$ or $T,S$ commute, this result  was obtained without an entropy assumption in \cite{FrK06} and \cite{CFH} respectively.

The convergence and recurrence results  of Theorems~\ref{th:convergence}-\ref{th:recurrence} are  non-trivial even in the case of  distal or weakly mixing systems, the reason being that in the absence of any commutativity assumptions, the  traditional methods for proving recurrence and mean convergence results do not seem to give us a useful starting point.

\subsection{Methods}
Our approach has some similarities with the one used  in \cite{F21} to  study the averages \eqref{E:aconv} for non-integral values of $a$. We proceed  by  first studying  Furstenberg systems of sequences of the form $(f(T^{p(n)}x))$; the advantage being that this  problem  can be handled by   analyzing    multiple ergodic averages that involve  only a single transformation, a problem that has been well studied  and  we can employ an arsenal of tools. A crucial difference with the approach taken  in \cite{F21}, is that unlike the case of  iterates given by non-integral powers, systems arising from polynomial iterates are not in general strongly stationary. Nevertheless, when the polynomial $p$ is non-linear  the  corresponding Furstenberg systems are expected to  have very particular structure, and  in fact, it was conjectured in \cite{F21} that they are direct products of Bernoulli systems and systems of algebraic structure. Although, we do not  establish this structural result, we obtain some partial information in Proposition~\ref{P:PolynomialIterates} that suffices for our purposes. It enables us
to  use a disjointness argument  and reduce the study of the limiting behavior of the averages in \eqref{eq:lim-main} to the case where the transformation $S$ is   structured (see  Proposition~\ref{prop:char3}). We then proceed by using  a variant of the
main structural result in \cite{HK1} and a Nilsequence Wiener-Wintner convergence result from \cite{HK3}, to conclude  the proof of Theorem~\ref{th:convergence} (see Proposition~\ref{prop:main}). Theorem~\ref{prop:Rationa-Kro}, as well as Theorem~\ref{th:recurrence}, require some additional work. After further reducing matters to the case where both transformations  $T$ and $S$ have algebraic structure,
we are left with verifying  an equidistribution property for nilsystems (stated in Proposition~\ref{P:mainnil}). After this property is established, it is an easy matter to show that the rational Kronecker factor is a characteristic factor for mean convergence and from this we deduce the recurrence property of Theorem~\ref{th:recurrence}.

\subsection{Is zero entropy necessary?}
It is natural to ask if the zero entropy assumption on $T$ in the previous results is necessary.
The answer is yes and in fact, by adapting known examples, we will  show that if $T$ has positive entropy, then for some measure preserving transformation $S$   the conclusions of Theorems~\ref{th:convergence}-\ref{th:recurrence} fail. This is a direct consequence of the following result (which is a variant of  \cite[Lemma~4.1]{FLW11}).
\begin{proposition}
\label{th:entropy}
Let $(X,\mu,T)$ be an ergodic system with positive entropy, $a,b\colon \N\to \Z\setminus \{0\}$ be injective sequences that miss infinitely many integers, and $F$ be an arbitrary subset of $\N$.
Then there exist a system $(X,\mu,S)$, with positive entropy, a
measurable set $A$, and $c>0$, such that
$$
\mu(T^{-a(n)}A\cap S^{-b(n)}A)=\begin{cases}0 \quad \text{if } n\in F,\\
c  \quad \text{if } n\notin F.\end{cases}
$$
As a consequence, there exist a  transformation $S$ and a set $A$ with $\mu(A)>0$   so that
$\mu(T^{-a(n)}A\cap S^{-b(n)}A)=0$ for every $n\in\N$, and another transformation $S$ and  a set $A$  so that
the  averages $\frac{1}{N}\sum_{n=1}^N\mu(T^{-a(n)}A\cap S^{-b(n)}A)$
do not converge as $N\to\infty$.
\end{proposition}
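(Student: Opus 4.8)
The plan is to generalize the example of Furstenberg quoted in the introduction. Since $(X,\mu,T)$ has positive entropy, Sinai's factor theorem provides a factor map $\pi\colon (X,\mu,T)\to(Y,\nu,\sigma)$ onto a two-sided Bernoulli shift on $Y=\{0,1\}^\Z$, where $\nu$ is the product of copies of $(1-p,p)$ for some $p\in(0,1/2]$ with $H(p)\le h(T)$. Writing $D_k=\{y\colon y(k)=1\}$ and $C_k=\pi\inv(D_k)=T^{-k}C_0$, the family $\{C_k\}_{k\in\Z}$ is independent of measure $p$ inside $\CX$. I set $A:=C_0$, so that $T^{-a(n)}A=C_{a(n)}$ for every $n$, and I seek $S$ of the form $S=RTR\inv$ with $R$ a measure preserving automorphism of $(X,\mu)$ that fixes $A$ and preserves the factor $\CF:=\pi\inv(\mathcal B_Y)$; then $S$ is automatically measure preserving with $h(S)=h(T)>0$, and $S^{-b(n)}A=R(T^{-b(n)}A)$.

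The construction of $R$ is carried out on the factor. I build an automorphism $R_Y$ of $(Y,\nu)$ by prescribing its values on the generating independent family $\{D_k\}$: put $R_Y(D_0)=D_0$, set $R_Y(D_{b(n)})=D_{a(n)}$ when $n\notin F$, and set $R_Y(D_{b(n)})=D_{a(n)}^c\cap G_n$ when $n\in F$, where $G_n$ is an event of measure $p/(1-p)$ built from a block of ``fresh'' coordinates (disjoint across $n$ and from $\mathrm{range}(a)\cup\{0\}$, which is possible since $a$ misses infinitely many integers), the remaining $D_k$ being sent to further fresh measure-$p$ sets so that the whole image family is again independent, of measure $p$, and generating. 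Because the source family $\{D_k\}$ and its prescribed image are two independent, measure-$p$, generating families indexed by $\Z$, the correspondence $D_k\mapsto R_Y(D_k)$ extends to an automorphism of the Lebesgue space $(Y,\nu)$. Representing the extension $\pi$ as a product $X\cong Y\times Z$ over $Y$ (Rokhlin's disintegration theorem, after arranging that the conditional measures are non-atomic, which is where the choice of a factor of entropy strictly below $h(T)$ enters), I lift $R_Y$ to $R:=R_Y\times\id_Z$; this preserves $\CF$, acts as $R_Y$ there, and fixes $A=\pi\inv(D_0)$.

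With these choices the relevant intersections lie in $\CF$ and are computed in the factor: one has $T^{-a(n)}A=\pi\inv(D_{a(n)})$ and $S^{-b(n)}A=R(T^{-b(n)}A)=\pi\inv(R_Y(D_{b(n)}))$, so that $\mu(T^{-a(n)}A\cap S^{-b(n)}A)=\nu(D_{a(n)}\cap R_Y(D_{b(n)}))$, which equals $p$ when $n\notin F$ and $0$ when $n\in F$; thus $c=p$ works. The two stated consequences follow immediately: taking $F=\N$ gives a set $A$ with $\mu(A)=p>0$ and $\mu(T^{-a(n)}A\cap S^{-b(n)}A)=0$ for all $n$, while taking $F$ with $\frac1N\,|\{n\le N\colon n\notin F\}|$ oscillating (for instance $F^c=\bigcup_k[2^{2k},2^{2k+1})$) makes $\frac1N\sum_{n=1}^N\mu(T^{-a(n)}A\cap S^{-b(n)}A)=c\cdot\frac1N\,|\{n\le N\colon n\notin F\}|$ diverge.

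The main difficulty is the construction of $R$, and it is of a conceptual nature. To get intersection exactly $0$ on $F$ the image of $D_{b(n)}$ must be \emph{disjoint} from $D_{a(n)}$; for the symmetric weight $p=1/2$ one may simply flip the $a(n)$-th coordinate, which is precisely Furstenberg's involution, but for a general $p$ a single-coordinate flip is not measure preserving, and disjointness must instead be produced by the fresh sets $G_n$ while \emph{simultaneously} keeping the image family independent of measure $p$, since it is exactly this independence that makes $R_Y$ extend to an automorphism. The second, more routine, point is the passage from the factor to $X$, i.e.\ knowing that an automorphism of $\CF$ can be realized on $(X,\mu)$ fixing $A$; I handle this by the product representation of the extension, the verification of whose availability (non-atomic fibers) is the only place where Sinai's theorem and the strict entropy inequality are used.
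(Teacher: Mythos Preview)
Your overall strategy matches the paper's: pass to a Bernoulli factor via Sinai, build $S$ as a conjugate of the shift by a carefully chosen automorphism, and lift back using a product representation of the extension. The computation of the intersections and the derivation of the two consequences are correct \emph{provided} $R_Y$ exists. That is precisely where the argument breaks.

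For the correspondence $D_k\mapsto E_k:=R_Y(D_k)$ to extend to an automorphism of $(Y,\nu)$, the image family $\{E_k\}$ must \emph{generate} the Borel $\sigma$-algebra of $Y$ (otherwise $y\mapsto(\one_{E_k}(y))_k$ is only a factor map). With your choices this fails whenever $p<1/2$ and $F\neq\emptyset$. Fix $n\in F$. By construction the only $E_k$ that involves coordinate $a(n)$ is $E_{b(n)}=D_{a(n)}^c\cap G_n$; every other $E_k$ is built from coordinates independent of $D_{a(n)}$. A short computation then gives
\[
\E_\nu\bigl(\one_{D_{a(n)}}\,\big|\,\sigma(\{E_k\})\bigr)=
\begin{cases} 0 & \text{on }E_{b(n)},\\ p/(1-p) & \text{on }E_{b(n)}^c,\end{cases}
\]
which is not $\{0,1\}$-valued since $p<1/2$; hence $D_{a(n)}\notin\sigma(\{E_k\})$. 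Your sentence ``so that the whole image family is again \ldots\ generating'' is therefore an unjustified assertion: to achieve generation the spare $E_k$'s would have to be entangled with $D_{a(n)}$ while remaining independent of $E_{b(n)}$ and of one another, and you give no indication of how (or whether) this can be arranged. Note that for $p=1/2$ one has $G_n=Y$, $E_{b(n)}=D_{a(n)}^c$, and the problem disappears---this is exactly Furstenberg's involution---but $p=1/2$ is available only when $h(T)\geq\log 2$.

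The paper avoids this obstacle by taking the factor to be the Bernoulli shift on \emph{three} symbols $\{0,1,2\}$ with weights $(s,s,1-2s)$ for small $s$. The equal weights on $0$ and $1$ make the coordinatewise swap $0\leftrightarrow 1$ (fixing $2$) measure preserving for every $s$, so the conjugating map can be written explicitly as ``permute coordinates by $\pi$, then flip $0\leftrightarrow 1$ off coordinate $0$''---manifestly an automorphism, with no generation argument needed. One then simply chooses the permutation $\pi$ so that $\pi(b(n))=a(n)$ exactly when $n\in F$, giving $\mu(T^{-a(n)}A\cap S^{-b(n)}A)=0$ on $F$ and $=s^2$ off $F$.

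A secondary remark: the product representation $X\cong Y\times Z$ follows from Rohlin's skew-product theorem for ergodic extensions (this is what the paper invokes), not from the disintegration theorem, and your proposed justification---that a strict entropy drop forces non-atomic fibers---is incorrect (positive relative entropy does not rule out atomic fibers). The conclusion you need is nonetheless available via the correct citation.
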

\begin{remark}
Theorems~\ref{th:convergence} and \ref{th:recurrence} show that if the   transformation $T$ has zero entropy, then	 such general constructions are not possible.
	\end{remark}

\subsection{Further directions}
\label{SS:problems}
Our arguments fail if in place of the iterates $n,p(n)$ in Theorems~\ref{th:convergence}-\ref{th:recurrence}
 we use the pair of iterates $n,n$ or the pair $n^2,n^3$. In the first case the problem
 is that  Furstenberg systems of sequences of the form $(g(S^nx))$ no
 longer have special structure, and we  therefore lose the crucial starting point provided by  Proposition~\ref{P:PolynomialIterates}. In the second case, the
 problem is that the zero entropy assumption on $T$ does not pass to   Furstenberg systems of the sequence $(f(T^{n^2}x))$ (which can be Bernoulli).
In either case,
 we were not able to prove convergence or recurrence
  even when both $T$ and $S$ are weakly mixing or isomorphic to  $2$-step distal systems of the form $(x,y)\mapsto (x+\alpha, y+f(x))$ defined on $\T^2$ with the   Lebesgue measure $m_{\T^2}$. This leads to the following problem.
  \begin{problem*}
 Does Theorem~\ref{th:convergence} hold if in place of the iterates $n,p(n)$
 we use the pair of iterates $n,n$ or the pair $n^2,n^3$? Do we also have recurrence in these cases?
  \end{problem*}
One can ask a more general question for pairs of iterates given by arbitrary polynomials $p,q\in \Z[t]$ with $p(0)=q(0)=0$.
   It is not clear which way the answer should go and we would not be surprised if the answer to these questions turns out to be  negative (see though \cite{A85} for some positive results when the iterates are $n,n$).

Lastly, we expect that under the assumptions of Theorem~\ref{th:convergence}  the averages \eqref{eq:lim-main}
converge pointwise almost everywhere. In this regard, we have the following conditional result that lends credence to this  conjecture.
\begin{proposition}
\label{th:pointwise}
Let $T,S$ be measure preserving transformations acting on a probability space $(X,\CX,\mu)$  such that the system $(X,\mu,T)$ has zero entropy.
	Let  also  $p\in \Z[t]$ with $\deg(p)\geq 2$, and suppose that for  all $\ell\in \N$ and  functions $f_1,\ldots, f_\ell\in L^\infty(\mu)$, the averages
	\begin{equation}\label{E:assumption}
	\frac{1}{N}\sum_{n=1}^N f_1(S^{p(n+1)} x)\cdot \ldots \cdot f_\ell(S^{p(n+\ell)}x)
	\end{equation}
	converge pointwise almost everywhere. Then for  all functions
	$f,g\in L^\infty(\mu)$, the averages
	$$
	\frac{1}{N}\sum_{n=1}^N f(T^{n} x)\cdot  g(S^{p(n)}x)
	$$
	converge pointwise almost everywhere and they converge to $0$ if either $\E(f|\Krat(T))=0$ or $\E(g|\Krat(S))=0$.
\end{proposition}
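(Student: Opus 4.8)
The plan is to follow the architecture of the proofs of Theorems~\ref{th:convergence} and~\ref{prop:Rationa-Kro}, replacing every mean-convergence input by its pointwise analogue, with the standing hypothesis \eqref{E:assumption} supplying on the $S$-side the pointwise control that the Furstenberg-system machinery otherwise provides only distributionally. \textbf{Reductions.} Writing $f=\E(f\mid\Krat(T))+f'$ and $g=\E(g\mid\Krat(S))+g'$ with $\E(f'\mid\Krat(T))=\E(g'\mid\Krat(S))=0$, bilinearity splits the average into four terms, so it suffices to prove separately (i) the \emph{vanishing} assertion that the averages tend to $0$ a.e.\ whenever $\E(f\mid\Krat(T))=0$ or $\E(g\mid\Krat(S))=0$, and (ii) \emph{convergence} when $f$ is $\Krat(T)$-measurable and $g$ is $\Krat(S)$-measurable. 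Statement (i) disposes of the three cross terms and is itself the claimed vanishing conclusion, while (ii) handles the surviving $\Krat(T)\otimes\Krat(S)$ term. For (ii), by the Wiener maximal inequality for $T$ together with a maximal inequality for the polynomial averages of $S$ (available from \eqref{E:assumption} via the Banach principle, or from Bourgain), one may approximate $f,g$ in $L^2$ by finite combinations of rational eigenfunctions. For a single pair $\phi\circ T=\lambda\phi$, $\psi\circ S=\kappa\psi$ with $\lambda,\kappa$ roots of unity, one has $\phi(T^nx)=\lambda^n\phi(x)$ and $\psi(S^{p(n)}x)=\kappa^{p(n)}\psi(x)$ for a.e.\ $x$; since $p\in\Z[t]$, the summand is periodic in $n$, so its Cesàro averages converge pointwise to the mean over a period. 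This settles (ii).

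\textbf{Vanishing, first half.} This is the heart of the matter, and I would import the analysis behind Propositions~\ref{P:PolynomialIterates} and~\ref{prop:char3} into the pointwise world. The zero-entropy hypothesis on $T$ makes $n\mapsto f(T^nx)$ a deterministic sequence for a.e.\ $x$, while \eqref{E:assumption} makes a.e.\ $x$ generic for all self-correlations of $(g(S^{p(n)}x))_n$, so that the associated Furstenberg system is realised along the \emph{full} sequence and not merely along a subsequence. Feeding the partial structural information of Proposition~\ref{P:PolynomialIterates} into the disjointness argument of Proposition~\ref{prop:char3}, and using that a zero-entropy system is disjoint from any Bernoulli system, the positive-entropy component of the $S$-side decorrelates from the deterministic weight $f(T^nx)$ and contributes $0$ to the pointwise averages. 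This reduces matters to the case in which $S$ is an inverse limit of nilsystems; after the further reduction (as in Proposition~\ref{P:mainnil} and the proof of Theorem~\ref{prop:Rationa-Kro}) placing $T$ into algebraic form as well, $g(S^{p(n)}x)$ becomes a polynomial nilsequence in $n$ of degree $\deg(p)\ge2$.

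\textbf{Vanishing, second half.} For this final case I would combine a pointwise Nilsequence Wiener--Wintner theorem for the zero-entropy system $(X,\mu,T)$ in the spirit of \cite{HK3} with the equidistribution property of Proposition~\ref{P:mainnil}. The decisive use of $\deg(p)\ge2$ is that the joint orbit produces resonances only when \emph{both} frequencies are rational: at the level of eigenvalues, if the $S$-eigenvalue $\kappa$ is irrational then $\frac1N\sum_{n\le N}\lambda^n\kappa^{p(n)}\to0$ by Weyl's theorem applied to the degree-$\ge2$ phase, while if $\kappa$ is rational but the $T$-eigenvalue $\lambda$ is irrational the same average vanishes after splitting $n$ into residue classes modulo the period of $p(n)\bmod\mathrm{ord}(\kappa)$; Proposition~\ref{P:mainnil} packages the corresponding statement for the full nilpotent factors. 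Hence only the $\Krat(T)\otimes\Krat(S)$ component survives, so the averages tend to $0$ a.e.\ whenever $\E(f\mid\Krat(T))=0$ or $\E(g\mid\Krat(S))=0$. Combining this with (ii) gives both the pointwise convergence and the stated form of the limit.

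\textbf{Main obstacle.} The delicate point is the \emph{pointwise transfer}: the structural decomposition of the Furstenberg system of $(g(S^{p(n)}x))$ is intrinsically a statement about weak-$*$ limits of empirical measures, whereas the conclusion demands honest pointwise convergence of the bilinear averages. Hypothesis~\eqref{E:assumption} is exactly what bridges this gap, forcing the relevant correlations to converge along the full sequence; the genuine difficulty that remains is to decorrelate the random component of the $S$-side from the deterministic $T$-orbit pointwise and to control the resulting error terms uniformly in $N$, which is where the zero-entropy disjointness and the maximal inequalities must be invoked.
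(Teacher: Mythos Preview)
Your plan is sound and the ingredients are the right ones, but you take a longer route than the paper and miss one simplification that makes the vanishing assertion nearly free.

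The paper does not decompose along $\Krat$ at the outset. It first proves pointwise convergence of the full averages, by exactly the mechanism you sketch: hypothesis~\eqref{E:assumption} makes a.e.\ $x$ generic along the full sequence $([N])$ for the Furstenberg system of $(g(S^{p(n)}x))$, so the disjointness argument of Proposition~\ref{prop:char3} runs pointwise and kills the contribution with $\E(g\mid\CZ_\infty(S))=0$; Bourgain's maximal inequality then allows the $L^2$ approximation down to $g$ measurable for $\CZ_k(S)$, and the nilsequence Wiener--Wintner theorem (Theorem~\ref{th:WW}) finishes convergence exactly as in Proposition~\ref{prop:main}. Once pointwise convergence is in hand, the pointwise limit coincides a.e.\ with the $L^2$ limit, and the latter is already known to be $0$ under either hypothesis by Theorem~\ref{prop:Rationa-Kro}. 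So the vanishing half costs nothing extra.

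Your route instead re-proves the content of Theorem~\ref{prop:Rationa-Kro} pointwise. This works for the case $\E(f\mid\Krat(T))=0$ via Corollary~\ref{cor:nilmain}/Proposition~\ref{P:mainnil}, as you indicate. But your appeal to Proposition~\ref{P:mainnil} for the case $\E(g\mid\Krat(S))=0$ is imprecise: that proposition is asymmetric, placing the $\Krat$-orthogonality on the $T$-side against an \emph{arbitrary} nilsequence on the $S$-side, and it does not directly cover the symmetric situation. After reducing $f$ to a rational $T$-eigenfunction, what one actually needs is the single-transformation fact that $\tfrac1M\sum_{m\leq M} g(S^{p(rm+j)}x)\to 0$ a.e.\ when $\E(g\mid\Krat(S))=0$ (Bourgain for existence of the pointwise limit, the spectral theorem plus Weyl to identify it as $0$); this is not hard but is not Proposition~\ref{P:mainnil}. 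Also, the Banach principle goes the other way (maximal inequality $\Rightarrow$ transfer of a.e.\ convergence), so one really does need Bourgain's maximal inequality here, as you note parenthetically.
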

We remark that pointwise convergence of  the averages \eqref{E:assumption} is expected to hold, but it is considered  to be very hard to establish  even when $\ell=2,3$ and $p(n)=n^2$.

\subsection{Notation} \label{SS:notation}
For  $N\in\N$  we let  $[N]:=\{1,\dots,N\}$.
We usually denote sequences on $\N$ or on $\Z$ by  $(a(n))$, instead of $(a(n))_{n\in\N}$ or $(a(n))_{n\in \Z}$; the domain of the sequence is going to be clear from the context.
  With $\T$ we denote the one dimensional torus  $\R/\Z$, and we often identify it   with $[0,1)$.
We often denote elements of $\T$ with  real numbers and we are implicitly  assuming that these real numbers are taken  modulo $1$. For $t\in \R$ or $\T$ we let $\e(t):=\exp(2\pi i t)$.
\subsection*{Acknowledgement} The authors would like to thank the referee for helpful remarks and corrections.

\section{Background and  tools from ergodic theory}
In this section we gather some basic facts needed in the sequel and establish some notation. In
order to avoid unnecessary repetition,  we refer the reader to \cite{HK-book} and~\cite{G} for some other standard notions from ergodic theory used in this article.

\subsection{Measure preserving systems}
Throughout,  by a {\em measure preserving system}, or simply a {\em system},  we mean a Lebesgue probability space $(X,\mathcal{X},\mu)$ together with an invertible, measurable,  and measure preserving transformation $T\colon X\to X$.  In general we omit  the $\sigma$-algebra and write a system as $(X,\mu,T)$ and sometimes we abbreviate the notation and write $X$ or $T$.
The system is ergodic if the only $T$-invariant sets in $\CX$ have measure $0$ or $1$. If $f\in L^\infty(\mu)$ and $n\in\Z$, with $T^nf$ we denote the composition $f\circ T^n$, where for $n\in\N$, we let $T^n:=T\circ\cdots\circ T$ ($n$ times),  $T^{-n}=(T\inv)^n$,  and $T^0=\id$.

\subsection{Factors and joinings}
\label{subsec:joinings}
Let $(X,\CX,\mu,T)$ be a system. By a \emph{factor} of $X$ we mean  a $T$-invariant sub-$\sigma$-algebra of $\CX$.
A \emph{factor map} from the  system  $(X,\CX,\mu,T)$ to the system $(Y,\CY,\nu, S)$ is a measurable map $\pi\colon X\to Y$ with $\pi\circ T=S\circ\pi$ and such that $\nu$ is the image of the measure $\mu$ under $\pi$. In this case, the sub-$\sigma$-algebra $\pi\inv(\CY)$ of $\CX$
is a factor of $X$ and every factor of $X$ can be obtained in this way.
We say that a factor {\em is spanned} by a given collection of functions if it coincides  with the smallest $\sigma$-algebra with respect to which  all these functions are measurable.

A \emph{joining} of two systems $(X,\mu,T)$ and $(Y,\nu,S)$ is a measure $\rho$ on  $X\times Y$, invariant under $T\times S$ and whose projections on $X$ and $Y$ are equal to $\mu$ and $\nu$, respectively.

\subsection{The rational Kronecker factor}
\label{SS:RK}
Let $(X,\mu,T)$ be a system.
For $r\in\N$ we let $\CI(T^r)$ be the $\sigma$-algebra of $T^r$-invariant subsets of $X$. The \emph{rational Kronecker factor} of the system is the $\sigma$-algebra
$\bigvee_{r\in \N}\CI(T^r)$.
We denote it by  $\Krat(T)$ or  $\Krat(X,\mu,T)$.
It follows from the definition that this factor  is  spanned by the  functions $f\in L^2(\mu)$ with $Tf=\e(s)f$ for some $s\in\Q$.
It follows also that for $f\in L^2(\mu)$ we have $\E_\mu(f|\Krat(T))=0$ if and only if $\E_\mu(f|\CI(T^r))=0$ for every $r\in\N$, that is,
$$
 \lim_{N\to\infty}\frac{1}{N}\sum_{n=1}^N f(T^{rn}x)=0 \ \text{ for every } r\in\N,\ \mu\text{-a.e..}
$$
Hence, if  $\mu=\int\mu_x\,d\mu(x)$ is the ergodic decomposition of $\mu$, then  $\E_\mu(f|\Krat(X,\mu,T))=0$ if and only if $\E_{\mu_x}(f|\Krat(X,\mu_x,T))=0$ for $\mu$-almost every $x\in X$.

\subsection{The Pinsker factor}
Every system $(X,\mu,T)$ admits a largest factor belonging to the class of systems of entropy zero. This factor is called the \emph{Pinsker factor} of $(X,\mu,T)$ and is denoted by  $\Pi(X,\mu,T)$, or $\Pi(T)$,  if there is no danger of  confusion.  The next result follows from  \cite[Lemma 3]{T75}
(see also \cite{dlR} and ~\cite[Proposition~2.2]{KKLR}).
\begin{proposition}
\label{prop:Pinsker}
Let $\rho$ be a  joining of  a system $(X,\mu,T)$ of zero entropy  and a system $(Y,\nu,S)$. If $g\in L^2(\nu)$ is such that $\E_\nu(g| \Pi(S))=0$, then for every $f\in L^2(\mu)$ we have
$\int f(x)\cdot g(y)\,d\rho(x,y)=0$.
\end{proposition}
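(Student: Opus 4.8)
The plan is to pass to the joint system $(X\times Y,\rho,T\times S)$ and reinterpret the desired identity as a statement about conditional expectations. Let $\pi_X,\pi_Y$ denote the coordinate projections. The function $(x,y)\mapsto f(x)$ is measurable with respect to the factor $\pi_X\inv(\CX)$, which as a factor system is a copy of $(X,\mu,T)$ and hence has zero entropy. Setting
$$
\phi:=\E_\rho\big(f\circ\pi_X\mid \pi_Y\inv(\CY)\big),
$$
viewed as an element of $L^2(\nu)$, and using that $(x,y)\mapsto g(y)$ is $\pi_Y\inv(\CY)$-measurable, we may pull $g$ out of the conditional expectation to obtain
$$
\int f(x)\,g(y)\,d\rho(x,y)=\int_Y \phi\cdot g\,d\nu.
$$
Thus it suffices to prove that $\phi$ is $\Pi(S)$-measurable: once this is known, $\int_Y\phi\,g\,d\nu=\int_Y\phi\cdot\E_\nu(g\mid\Pi(S))\,d\nu=0$ by the hypothesis on $g$.

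The crux is the assertion that, under the joining $\rho$, the deterministic factor $\pi_X\inv(\CX)$ is independent of $\pi_Y\inv(\CY)$ relative to $\pi_Y\inv(\Pi(S))$. Granting this relative independence and recalling that $f\circ\pi_X$ is $\pi_X\inv(\CX)$-measurable, we get
$$
\phi=\E_\rho\big(f\circ\pi_X\mid \pi_Y\inv(\CY)\big)=\E_\rho\big(f\circ\pi_X\mid \pi_Y\inv(\Pi(S))\big),
$$
and the right-hand side is $\Pi(S)$-measurable, which is exactly what remains. Intuitively this relative independence expresses the impossibility for a zero entropy system to correlate with the positive entropy, genuinely random part of $S$: for a joining one has the inclusion $\Pi(T\times S)\subseteq \pi_X\inv(\Pi(T))\vee\pi_Y\inv(\Pi(S))$ for the Pinsker factor of the joint system, and since $\Pi(T)=\CX$ by the zero entropy hypothesis, any $\pi_X\inv(\CX)$-measurable function can only ``see'' the $\Pi(S)$-part of the second coordinate.

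Establishing this entropy-theoretic input is the only genuine difficulty, and it is precisely where the zero entropy assumption on $T$ is indispensable; everything else above is a formal reduction. I would deduce it from \cite[Lemma~3]{T75}, with the alternative treatments in \cite{dlR} and \cite[Proposition~2.2]{KKLR} available as substitutes. In effect this lemma reduces the disjointness-type conclusion of the proposition to the single relative-independence statement, after which the bookkeeping of the first paragraph finishes the proof.
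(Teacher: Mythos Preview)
Your approach aligns with the paper's: both invoke \cite[Lemma~3]{T75} (with the alternatives \cite{dlR} and \cite[Proposition~2.2]{KKLR}) as the essential input, and indeed the paper gives no argument beyond this citation, so your reduction to a relative-independence statement is simply a fuller account of the same route.

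One caveat on your heuristic paragraph: the inclusion $\Pi(T\times S)\subseteq \pi_X\inv(\Pi(T))\vee\pi_Y\inv(\Pi(S))$ is stated in the wrong direction---the reverse inclusion is what holds in general (the join of two zero-entropy factors has zero entropy and hence sits inside the Pinsker factor of the joint system), and in any case neither direction by itself yields the relative independence you need. Since you correctly identify this step as the genuine difficulty and defer to the cited references rather than relying on the heuristic, the overall argument is unaffected; just drop or correct that sentence.
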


We deduce from  this  the following fact (which is  probably a folklore result  but we were not able to find a textbook reference).
\begin{corollary}
\label{cor:Pinsker}
Let $\pi\colon(X,\mu,T)\to(Y,\nu,S)$ be a factor map and $g\in L^2(\nu)$. If $g$ is measurable with respect to $\Pi(S)$, then $g\circ\pi$ is measurable with respect to $\Pi(T)$. If $\E_\nu(g|\Pi(S))=0$,  then $\E_\mu(g\circ\pi|\Pi(T))=0$.
\end{corollary}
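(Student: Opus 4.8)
The plan is to establish the two assertions separately, deriving the measurability claim from the maximality of the Pinsker factor and the vanishing of the conditional expectation from Proposition~\ref{prop:Pinsker} via a graph joining.

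First, for the measurability claim, I would show that $\pi\inv(\Pi(S))$ is a zero entropy factor of $(X,\mu,T)$. Since $\pi\circ T=S\circ\pi$ we have $T\inv\pi\inv(\Pi(S))=\pi\inv(S\inv\Pi(S))=\pi\inv(\Pi(S))$, so $\pi\inv(\Pi(S))$ is a $T$-invariant sub-$\sigma$-algebra, that is, a factor. Moreover the factor system $(X,\pi\inv(\Pi(S)),\mu,T)$ is isomorphic, via $\pi$, to $(Y,\Pi(S),\nu,S)$, which has zero entropy by definition of the Pinsker factor; as isomorphic systems have equal entropy, $\pi\inv(\Pi(S))$ is a zero entropy factor of $(X,\mu,T)$. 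Because $\Pi(T)$ is the \emph{largest} zero entropy factor, it contains every such factor, whence $\pi\inv(\Pi(S))\subseteq\Pi(T)$ modulo $\mu$-null sets. Since $g$ is $\Pi(S)$-measurable, $g\circ\pi$ is $\pi\inv(\Pi(S))$-measurable and therefore $\Pi(T)$-measurable.

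For the second assertion I would build the graph joining of the zero entropy system $(X,\Pi(T),\mu,T)$ with $(Y,\nu,S)$ and apply Proposition~\ref{prop:Pinsker}. Concretely, define a measure $\rho$ on $\Pi(T)\otimes\CY$ by $\int F\,d\rho:=\int_X F(x,\pi(x))\,d\mu(x)$. Using $\pi\circ T=S\circ\pi$ together with the $T$-invariance of $\mu$, a routine change of variables shows that $\rho$ is $T\times S$-invariant, and its marginals are the restriction of $\mu$ to $\Pi(T)$ and the measure $\nu$, so $\rho$ is a joining of the zero entropy system $(X,\Pi(T),\mu,T)$ with $(Y,\nu,S)$. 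Since $\E_\nu(g|\Pi(S))=0$, Proposition~\ref{prop:Pinsker} gives $\int \phi(x)\,g(y)\,d\rho(x,y)=0$, that is $\int_X \phi(x)\,g(\pi(x))\,d\mu(x)=0$, for every $\phi\in L^2(X,\Pi(T),\mu)$. As $\phi$ ranges over all $\Pi(T)$-measurable $L^2$ functions and $g\circ\pi\in L^2(\mu)$ (because $\int|g\circ\pi|^2\,d\mu=\int|g|^2\,d\nu$), this says exactly that $g\circ\pi$ is orthogonal to $L^2(X,\Pi(T),\mu)$, i.e.\ $\E_\mu(g\circ\pi|\Pi(T))=0$.

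The only genuinely delicate points I would verify are that $\rho$ is a joining with the correct marginals and that $(X,\pi\inv(\Pi(S)),\mu,T)$ is truly isomorphic to $(Y,\Pi(S),\nu,S)$ rather than merely projecting onto it; both are standard facts about factor maps. There is no serious obstacle here: the main conceptual step is simply to recognize that Proposition~\ref{prop:Pinsker} should be applied to the graph joining of the Pinsker factor of $X$ (the zero entropy side) with $Y$, after which the conclusion is immediate.
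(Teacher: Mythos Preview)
Your proposal is correct and follows essentially the same route as the paper: the first assertion is derived from the maximality of $\Pi(T)$ among zero-entropy factors, and the second from Proposition~\ref{prop:Pinsker} applied to the joining of the Pinsker factor of $X$ with $Y$ induced by the graph of $\pi$. The only cosmetic difference is that the paper realizes $\Pi(T)$ as an external factor space $(\Pi(T),\lambda,R)$ with factor map $p\colon X\to\Pi(T)$ and takes $\rho$ to be the pushforward of $\mu$ under $p\times\pi$, whereas you work directly with the sub-$\sigma$-algebra $\Pi(T)$ inside $X$; these are equivalent formulations of the same joining.
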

\begin{proof}
The first assertion follows immediately from the maximality of $\Pi(T)$ in the family of factors of $X$ of entropy $0$. We prove the second assertion.
 If $g\in L^2(\nu)$ satisfies  $\E_\nu(g|\Pi(S))=0$ and $f\in L^2(\mu)$ is measurable with respect to $\Pi(T)$, it suffices to prove that $\int f\cdot g\circ\pi\,d\mu=0$. We denote  the Pinsker factor of $(X,\mu,T)$ by  $(\Pi(T),\lambda,R)$, with factor map $p\colon X\to\Pi(T)$. Then the function $f$ can be written as $f=h\circ p$ for some function $h\in L^2(\lambda)$. Let $\rho$ be the measure on $\Pi(T)\times Y$ that is the  image of the measure $\mu$ under the map $p\times\pi$. Then $\rho$ is a joining of  $(\Pi(T),\lambda,R)$ and $(Y,\nu,S)$ and by  Proposition~\ref{prop:Pinsker} we have $\int h(z)\cdot g(y)\,d\rho(z,y)=0$. By the definition of $\rho$ this integral is equal to $\int f\cdot g\circ\pi\,d\mu$ and we are done.
\end{proof}
The following simple fact will be crucial in the proof of Proposition~\ref{P:PolynomialIterates} below.
\begin{lemma}
\label{lem:Pinsker}
Let $(X,\mu,T)$ be a system and $f\in L^\infty(\mu)$ be real valued. Suppose that for every $\ell\geq 0$ and all $n_1,\dots,n_\ell\in\N$, distinct or not, we have
$$
\int f\cdot\prod_{j=1}^\ell T^{n_j}f\,d\mu=0.
$$
Then $\E_\mu(f|\Pi(T))=0$.
\end{lemma}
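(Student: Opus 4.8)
The plan is to translate the hypothesis into the vanishing of the conditional expectation of $f$ on the ``strict future'' $\sigma$-algebra, and then to identify the remote future with the Pinsker factor via the Rokhlin--Sinai theorem. First I would reduce to the case where $f$ generates the system. Let $\CF=\sigma(T^nf:n\in\Z)$, let $\pi\colon(X,\mu,T)\to(Y,\nu,S)$ be the associated factor, and write $f=\bar f\circ\pi$ with $\bar f$ real valued. Since $T^{n}f=(S^{n}\bar f)\circ\pi$, all the correlation integrals in the hypothesis are inherited by $\bar f$ on $Y$, where moreover $\bar f$ generates. By Corollary~\ref{cor:Pinsker} it suffices to prove $\E_\nu(\bar f\mid\Pi(S))=0$, so I may assume from now on that $\sigma(T^nf:n\in\Z)=\CX$ mod $\mu$.

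Next, set $\CA_k:=\sigma(T^nf:n\ge k)$ for $k\in\Z$; this is a decreasing family with $T^{-1}\CA_k=\CA_{k+1}$ and $\bigvee_{k}\CA_k=\CX$. The key point is that $\E_\mu(f\mid\CA_1)=0$. Indeed, by the $T$-invariance of $\mu$ the hypothesis says precisely that $f$ is orthogonal in $L^2(\mu)$ to every monomial $\prod_{j=1}^\ell T^{n_j}f$ with all $n_j\ge 1$ (the case $\ell=0$ giving $\int f\,d\mu=0$). Allowing repetitions among the $n_j$, these monomials together with the constants form a subalgebra of $L^\infty(\mu)$ that generates $\CA_1$, and hence spans a dense subspace of $L^2(\CA_1)$ because $f$ is bounded. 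Therefore $f\perp L^2(\CA_1)$, which is exactly $\E_\mu(f\mid\CA_1)=0$.

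Finally I would invoke the Rokhlin--Sinai theorem. Since $\sigma(f)$ is a two-sided generator, the remote future $\CA_\infty:=\bigcap_{k\ge 1}\CA_k$ coincides with the Pinsker factor $\Pi(T)$. In fact only the inclusion $\Pi(T)\subseteq\CA_1$ is needed: because $\Pi(T)$ is $T$-invariant and $T^{-(k-1)}\CA_1=\CA_k$, the single containment $\Pi(T)\subseteq\CA_1$ already yields $\Pi(T)\subseteq\CA_k$ for every $k\ge 1$, hence $\Pi(T)\subseteq\CA_\infty\subseteq\CA_1$. Granting this, the tower property gives $\E_\mu(f\mid\Pi(T))=\E_\mu\big(\E_\mu(f\mid\CA_1)\mid\Pi(T)\big)=0$, as desired.

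I expect the main obstacle to be the identification $\CA_\infty=\Pi(T)$, equivalently the single inclusion $\Pi(T)\subseteq\CA_1$: the density argument of the second step and the factor reduction of the first step are routine, whereas the statement that every zero-entropy factor is measurable with respect to the strict future is exactly the content of the Rokhlin--Sinai theorem and is the only substantial input. Should one wish to remain self-contained, this is the point at which the entropy theory must enter, via the fact that in a zero-entropy system the present is determined by the future (by the time-reversal symmetry of entropy).
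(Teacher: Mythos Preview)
Your proof is correct and follows essentially the same route as the paper's: reduce via Corollary~\ref{cor:Pinsker} to the factor generated by $\{T^nf:n\in\Z\}$, use the hypothesis to show that $f$ is orthogonal to the $\sigma$-algebra generated by $\{T^nf:n\ge 1\}$, and then invoke the Rokhlin--Sinai inclusion $\Pi(T)\subseteq\bigcap_{k\ge 1}\CA_k$ (the paper cites this as \cite[Lemma~18.7]{G}) together with the tower property. The only cosmetic difference is that the paper phrases the future $\sigma$-algebras as $T^{-n}\CF$ with $\CF=\sigma(T^kf:k\ge 0)$, and your remark about $T$-invariance of $\mu$ is unnecessary since the $n_j$ already range over $\N$.
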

(Here and below  the empty product is equal to $1$ by convention and thus the integral above is equal to $\int f\,d\mu$ if $\ell=0$.)
\begin{remark}
	If the function $f$ is complex valued, then in order to get the same conclusion we have to
	consider products of iterates of $f$ and $\bar{f}$ on the assumption.
\end{remark}
\begin{proof}
	Let $\mathcal{Y}$ be the  $\sigma$-algebra spanned  by $\{T^nf \colon n\in \Z\}$.
	Let also $Y$ be the corresponding factor-system. By Corollary~\ref{cor:Pinsker} it suffices to prove that $f$ has zero conditional expectation on the Pinsker factor of $Y$. Therefore, after substituting $Y$ for $X$, we can assume that $\mathcal{Y}=\mathcal{X}$.

Let $\CF$ be the $\sigma$-algebra spanned by $\{T^kf\colon  k\geq 0\}$.
For $n\geq 0$, $T^n\CF$ is the $\sigma$-algebra spanned by $\{T^kf\colon k\geq -n\}$ and by our reduction we have
$\CF_\infty:=\bigvee_{n=0}^\infty T^n\CF=\CX$.
On the other hand, for $n\in \N$ we have that $T^{-n}\CF$ is the $\sigma$-algebra
spanned by $\{T^kf\colon k\geq n\}$. By \cite[Lemma~18.7]{G}, the $\sigma$-algebra
$\CF_{-\infty}:=\bigcap_{n=0}^\infty T^{-n}\CF$ contains the Pinsker $\sigma$-algebra $\Pi(T)$ of $X$.
By hypothesis, $f$ has zero conditional expectation on $T^{-1}\CF$ and thus
on the smaller $\sigma$-algebra  $\CF_{-\infty}$. Since $\CF_{-\infty}$ contains $\Pi(T)$  we are done.
 \end{proof}

 \subsection{Nilsystems and nilsequences}
 A \emph{nilmanifold}  is a homogeneous space $X=G/\Gamma$ where  $G$ is a nilpotent Lie group,
 and $\Gamma$ is a discrete cocompact subgroup of $G$. If $G_{k+1}=\{e\}$, where $G_k$ denotes the $k$-th commutator subgroup of $G$, we say that $X$ is a
 $k$-\emph{step nilmanifold}.

Let $X=G/\Gamma$ be a  $k$-step nilmanifold. The   group $G$ acts on $X$ by left
 translations and we write this action as $(g\cdot x)\mapsto g\cdot x$ for $g\in G$ and $x\in X$.
  The unique probability measure on $X$ invariant under this action is called the {\em  Haar measure of $X$} and is denoted by  $m_X$.
If  $a\in G$,  $x\in X$, and
 $f\in \CC(X)$,
following~\cite{BHK} we call the sequence $(f(a^n\cdot x))$ a \emph{basic $k$-step nilsequence}.
 A \emph{$k$-step nilsequence}, is a uniform limit of \emph{basic $k$-step nilsequences}.
 If $a\in G$ and $T\colon X\to X$ is the map $x\mapsto a\cdot x$,  then
 the system $(X=G/\Gamma, m_X, T)$ is called a {\it $k$-step  nilsystem}. For  classical properties of nilsystems see~\cite[Chapter 11]{HK-book}.

 \subsection{Systems of finite order}\label{SS:ok}
 Let $(X,\mu,T)$ be a system. For $k\in\N$, the definition of the seminorm $\nnorm\cdot_k$ on $L^\infty(\mu)$, as well as related facts recorded  in this subsection, can be found in Chapters~8 and~9 of~\cite{HK-book}. We write $\nnorm\cdot_{\mu,k}$ in case of ambiguity. We define also $\nnorm f_\infty:=\sup_{k\in\N}\nnorm f_k$ and the properties of this seminorm follow immediately from the properties of the seminorms of finite index.
 For $k\in\N\cup\{\infty\}$  we say that  $(X,\mu,T)$ is a \emph{system  of order $k$} if the seminorm $\nnorm\cdot_{k+1}$ is a norm. For $k\in\N\cup\{\infty\}$, each system $(X,\mu,T)$  admits  a maximal factor of order $k$, that we denote by $\CZ_k(X,\mu,T)$ or $\CZ_k(T)$   when there is danger of confusion.
  Each system of order $\infty$ is an inverse limit of systems of finite orders, and for every system  we have $\CZ_\infty(T)=\bigvee_{k\in\N}\CZ_k(T)$.

 Let  $(X,\mu,T)$ be a system,  $\mu=\int\mu_x\,d\mu(x)$ be the ergodic decomposition of $\mu$ under $T$, and  $k\in\N\cup\{\infty\}$. Then $(X,\mu,T)$ is a system of order $k$, if and only if $(X,\mu_x,T)$ is a system of order $k$ for almost every $x\in X$. For $k\in\N$ this is~\cite[Chapter 9, Proposition~23]{HK-book} and the case $k=\infty$ follows.

 For $k\in\N$, the structure theorem~\cite[Chapter~16, Theorem~1]{HK-book}, originally proved in~\cite{HK1}, states  that ergodic systems of order $k$  are exactly the inverse limits of ergodic $k$-step  nilsystems.

\begin{theorem}[Nilsequence Wiener-Wintner~\cite{HK3}]
\label{th:WW}
Let $(X,\mu,T)$ be a system and $f\in L^\infty(\mu)$. Then there exists a subset $X_0$ of $X$ with $\mu(X_0)=1$ such that the limit
\begin{equation}
\label{eq:WW}
\lim_{N\to\infty}\frac 1N\sum_{n=1}^N \Psi(n)\cdot f(T^nx)
\end{equation}
exists for every $x\in X_0$ and every nilsequence $(\Psi(n))$.
Moreover, if $\E_\mu(f|\CZ_\infty(T))=0$, then the limit~\eqref{eq:WW} is equal to $0$ for every $x\in X_0$ .
\end{theorem}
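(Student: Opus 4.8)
The plan is to prove both assertions simultaneously by splitting $f$ according to the maximal factor of order $\infty$. I would write $f=f_s+f_u$, where $f_s:=\E_\mu(f\mid\CZ_\infty(T))$ and $f_u:=f-f_s$, so that $\nnorm{f_u}_k=0$ for every $k\in\N$. Since the averages in \eqref{eq:WW} are linear in $f$ and bounded by $\norm{\Psi}_\infty\norm{f}_\infty$, it suffices to treat the two pieces separately: for $f_s$ I would show that the limit \eqref{eq:WW} exists for every nilsequence $\Psi$ on a single full measure set, and for $f_u$ that the limit exists and equals $0$. The ``Moreover'' clause is then immediate, because $\E_\mu(f\mid\CZ_\infty(T))=0$ forces $f=f_u$. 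Throughout I would pass to the ergodic decomposition of $\mu$, using that both $\CZ_\infty(T)$ and the seminorms localize over it (Section~\ref{SS:ok}), so that one may assume $T$ ergodic and glue the resulting full measure sets back together.

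For the structured part $f_s$, the idea is to reduce to equidistribution on nilmanifolds. By the structure theorem \cite[Chapter~16, Theorem~1]{HK1}, the factor generated by $\CZ_\infty(T)$ is, in the ergodic case, an inverse limit of nilsystems; approximating $f_s$ in $L^2(\mu)$ by a function pulled back from a single nilsystem $(G/\Gamma,m_{G/\Gamma},x\mapsto a\cdot x)$, and then by a continuous function $F$ on $G/\Gamma$, the weighted average becomes $\frac1N\sum_{n\le N}\Psi(n)\,F(a^n\cdot z)$. Since the product of two nilsequences is again a nilsequence, this is an average of a single nilsequence in $n$, which converges by the equidistribution theory for polynomial orbits on nilmanifolds. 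The only points needing care are the uniformity of the $L^2$ approximation in $\Psi$ (controlled by $\norm{\Psi}_\infty$) together with a maximal inequality to upgrade mean approximation to a pointwise statement, and the passage through the inverse limit and the ergodic decomposition.

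The heart of the matter is the uniform part $f_u$, where the plan is an induction on the step $k$ of the nilsequence with inductive statement: \emph{if $\nnorm{f}_{k+1}=0$, then there is a full measure set on which $\frac1N\sum_{n\le N}\Psi(n)f(T^nx)\to0$ for every $k$-step nilsequence $\Psi$ with $\norm{\Psi}_\infty\le1$}. The base case is the classical uniform Wiener--Wintner theorem: Birkhoff's theorem handles constant weights, and the weights $\e(n\alpha)$, to which general one-step nilsequences reduce by Fourier expansion on the nil-torus, are covered by the classical result. For the inductive step I would apply van der Corput's inequality to bound $\limsup_N\bigl|\frac1N\sum_{n\le N}\Psi(n)f(T^nx)\bigr|^2$ by an average over shifts $h$ of the quantities $\frac1N\sum_{n\le N}\Psi(n+h)\overline{\Psi(n)}\cdot g_h(T^nx)$, where $g_h:=\overline f\cdot T^hf$. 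The controlling mechanism is then the seminorm recursion
\[
\nnorm{f}_{k+1}^{2^{k+1}}=\lim_{H\to\infty}\frac1H\sum_{h=1}^H\nnorm{\overline f\cdot T^hf}_k^{2^k},
\]
which shows that $\nnorm{f}_{k+1}=0$ forces $\nnorm{g_h}_k=0$ for $h$ in a set of density one, so that the inductive hypothesis applies to each such $g_h$ against the shifted weight. One intersects the finitely generated full measure sets $X_0(f_u,k)$ over all $k\in\N$ to obtain a single null-complement valid against every nilsequence.

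I expect the main obstacle to be exactly the feature that distinguishes a Wiener--Wintner statement from an ordinary pointwise ergodic theorem, namely that the exceptional null set must be independent of the weight $\Psi$, even though there is a continuum of nilmanifolds, base points, and functions producing the sequences $(\Psi(n))$. The reason this can be arranged is that the quantity ultimately controlling the averages is the $\Psi$-independent seminorm $\nnorm{f}_{k+1}$; making this rigorous requires combining the van der Corput bound with a Fubini-type argument, so that the order of quantifiers is ``for a.e.\ $x$, for all $\Psi$'' rather than the reverse. The genuinely delicate point inside the induction is that, unlike the scalar weights $\e(n\alpha)$ whose multiplicative derivative $\Psi(n+h)\overline{\Psi(n)}$ is constant in $n$, a general $k$-step nilsequence does not have its step lowered under this operation; one must therefore track the shifted products $\Psi(n+h)\overline{\Psi(n)}$ as nilsequences on a product nilmanifold and exploit that, after averaging over $h$, their contribution is governed precisely by the lower-index seminorm appearing in the recursion above. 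Reconciling this behavior of the weight with the seminorm reduction of $f$, uniformly over all admissible $\Psi$, is where the nilpotent structure does the essential work; the remaining ingredients --- the structure theorem, equidistribution on nilmanifolds, and the seminorm recursion --- are standard and available in \cite{HK-book}.
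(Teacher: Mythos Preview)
Your plan for the structured part $f_s$ is fine and matches the standard reduction. The paper, however, does not attempt a direct van der Corput induction for the uniform part $f_u$. Instead it invokes the \emph{sequence} uniformity seminorms $\norm{\cdot}_{\bN,k+1}$ from \cite{HK3}: by \cite[Corollary~3.10]{HK3} one has $\norm{(f(T^n x))}_{\bN,k+1}=\nnorm{f}_{\mu_x,k+1}=0$ for $\mu$-almost every $x$ (via the ergodic decomposition), and then \cite[Corollary~2.14]{HK3} gives, for every $\delta>0$, a constant $C=C(\delta,\Psi)$ with
\[
\limsup_{N\to\infty}\Bigl|\frac 1N\sum_{n=1}^N\Psi(n)\,\phi(n)\Bigr|\le C\,\norm{\phi}_{\bN,k+1}+\delta\,\norm{\phi}_\infty.
\]
The point is that the entire dependence on $\Psi$ is absorbed into the constant $C$, while the dependence on $x$ sits in the $\Psi$-free quantity $\norm{\phi_x}_{\bN,k+1}$; this is exactly what makes the null set independent of $\Psi$, and it bypasses the inductive difficulties you raise.

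Your proposed induction, by contrast, has two concrete gaps. First, the assertion that $\nnorm{f}_{k+1}=0$ forces $\nnorm{\bar f\cdot T^h f}_k=0$ for a density-one set of $h$ is false: the recursion only gives $\frac1H\sum_{h\le H}\nnorm{\bar f\cdot T^h f}_k^{2^k}\to 0$, which means these seminorms are \emph{small on average}, not zero on a density-one set. Your inductive hypothesis is qualitative (``if the seminorm vanishes, the averages vanish''), so it does not apply to $g_h$; you would need a quantitative hypothesis of the form ``the $\limsup$ of the averages is bounded by a function of $\nnorm{g_h}_k$'', uniformly in the weight. Second, as you yourself note, the differenced weight $\Psi(n+h)\overline{\Psi(n)}$ is still a $k$-step nilsequence, so even a corrected quantitative hypothesis at level $k-1$ would not apply. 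Resolving this requires exactly the machinery encapsulated in \cite[Corollary~2.14]{HK3}: one must iterate van der Corput $k$ times and use the structure of the nilsequence to see that the resulting multicorrelation is controlled by $\norm{\phi}_{\bN,k+1}$ with a $\Psi$-dependent constant. That argument is the substance of \cite{HK3}, and your sketch does not supply it.
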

\begin{remarks}
$\bullet$ It is  important  for our purposes  that the set $X_0$ does not depend on the nilsequence $\Psi$.

$\bullet$ The first assertion is~\cite[Theorem 2.22]{HK3} in the case of an ergodic system, and
\cite[Chapter~23, Theorem~14]{HK-book} in the general case. The second assertion is implicit in~\cite{HK3} and~\cite{HK-book} and we give a short proof here.
\end{remarks}
\begin{proof}
Let $k\in\N$. We write $\bN$ for the sequence of intervals $([N])_{N\in \N}$. The proof
 uses the notion of the uniformity seminorms $\norm\cdot_{\bN,k+1}$ for sequences in $\ell^\infty(\Z)$ that was introduced in~\cite{HK3}.

  If $\E_\mu(f|\CZ_\infty(T))=0$ we will show that there exists a subset $X_0$ of $X$ with $\mu(X_0)=1$ such that the limit~\eqref{eq:WW} is equal to $0$ for every $x\in X_0$  and for every $k$-step nilsequence $\Psi$.

Let  $\mu=\int\mu_x\,d\mu(x)$ be the ergodic decomposition of $\mu$ and
for $x\in X$ let $\phi_x\in\ell^\infty(\Z)$ be the sequence $(f(T^nx))$.
Since $\E_\mu(f|\CZ_\infty(T))=0$ we have for every $k\in\N$ that $\nnorm f_{\mu,k+1}=0$, and since $\int \nnorm f_{\mu_x,k+1}^{2^{k+1}}\,d\mu(x)=\nnorm f_{\mu,k+1}^{2^{k+1}}=0$, it follows that $\nnorm f_{\mu_x,k+1}=0$ for $\mu$-almost every $x\in X$.
By~\cite[Corollary 3.10]{HK3}, for $\mu$-almost every $x\in X$ and  $\mu_x$-almost every $x'\in X$ we have
$\norm{\phi_{x'}}_{\bN,k+1}=\nnorm f_{\mu_x,k+1}=0$. Therefore, $\norm{\phi_x}_{\bN,k+1}=0$ for a set  $X_0$  of $x\in X$ that has  full measure.

Furthermore, by \cite[Corollary 2.14]{HK3}, for every $\delta>0$ there exists a constant $C=C(\delta,\Psi)>0$ such that for every bounded sequence $\phi$ we have
 $$
\limsup_{N\to\infty} \Bigl|\frac 1N\sum_{n=1}^N \Psi(n) \cdot \phi(n)\Bigr|\leq C\norm{\phi}_{\bN,k+1}+\delta\norm{\phi}_\infty.
$$
Applying this with $\phi=\phi_x$  for $x\in X_0$ (recall that $\norm{\phi_x}_{\bN,k+1}=0$ for  every $x\in X_0$) and letting $\delta\to 0$, gives the announced result.
\end{proof}

\subsection{Furstenberg systems of sequences}\label{SS:Defs}
We reproduce here  the notion of a Furstenberg system of a bounded sequence
from \cite{FH18}  and record some basic related facts that will be used later. For notational convenience we restrict to real valued sequences.
\begin{definition}
	Let   $ \bN:=([N_k])_{k\in\N}$ be a sequence of intervals with $N_k\to \infty$ and $I$ be a bounded closed interval in $\R$.
	We say that the sequence $z=(z(n))_{n\in\Z}$ with values in $I$
	{\em admits   correlations  on $\bN$}, if the   limits
	\begin{equation}\label{E:CorDef}
	\lim_{k\to\infty}\frac{1}{N_k}\sum_{n=1}^{N_k} \prod_{j=1}^s z(n+n_j)
	\end{equation}
	exist for all $s \in \N$ and all   $n_1,\ldots, n_s\in \Z$
	(not necessarily distinct).
\end{definition}
\begin{remarks}
$\bullet$	 Given $z\colon \Z \to I$,   using a diagonal argument, we get that every sequence of intervals $\bN=([N_k])_{k\in \N}$
	has a subsequence $\bN'=([N_k'])_{k\in\N}$, such that the sequence  $z$  admits  correlations on $\bN'$.

$\bullet$
If we are given a one-sided sequence $(z(n))_{n\in\N}$ we extend it to $\Z$ in an arbitrary way; then the existence and values of the correlations do not depend on the extension.
\end{remarks}

If a sequence $z\colon \Z\to I$   admits  correlations on a given sequence of intervals,  then we use a  variant of the correspondence principle of Furstenberg~\cite{Fu77, Fu} in order
to associate a measure preserving system that captures the statistical properties of this sequence. We briefly describe this process next.

 Let $\Omega:=I^\Z$. The elements of $\Omega$ are written $\omega:=(\omega(n))_{n\in\Z}$ and the shift $\sigma\colon\Omega\to\Omega$ is defined by $(\sigma \omega)(n):=\omega(n+1)$,  $n\in \Z$. We consider the sequence $z$ as an element of $\Omega$. Note that  the algebra generated by functions of the form $\omega\mapsto \omega(k)$, $\omega\in \Omega$, for   $k\in  \Z$,  separates points in $\Omega$. We conclude that  if the
sequence $z\colon \Z\to I$
admits   correlations  on $\bN$, then  for all $f\in \CC(\Omega)$ the following limit exist
$$
\lim_{k\to\infty}\frac{1}{N_k}\sum_{n=1}^{N_k}\,  f(\sigma^nz).
$$
Hence, the  following weak*-limit exists
$$
\nu:=\lim_{k\to\infty}\frac{1}{N_k}\sum_{n=1}^{N_k}\, \delta_{\sigma^nz}
$$
and we say that \emph{the point $z$ is generic for $\nu$ along $\bN$}.

\begin{definition}
\label{D:correspondence}
Let $I$ be a  compact interval and let $z\colon \Z\to I$ be a sequence that  admits
correlations  on $\bN:=([N_k])_{k\in\N}$, and  $\Omega$,  $\sigma$,
and $\nu$ as above.
\begin{itemize}
\item
We call $(\Omega,\nu,\sigma)$ the {\em  Furstenberg system
associated with} $z$ on  $\bN$.

\item
We let $F_0\in \CC(\Omega)$  be defined by $F_0(\omega):=\omega(0)$, $\omega \in \Omega$, and call it the {\em $0^{\text{th}}$-coordinate projection}. Note that  $F_0(\sigma^nz)=z(n)$ for every $n\in \Z$ and
\begin{equation}
\label{E:correspondence}
\lim_{k\to\infty}\frac{1}{N_k}\sum_{n=1}^{N_k}\,  \prod_{j=1}^s z(n+n_j) =\int  \prod_{j=1}^s
\sigma^{n_j}F_0 \, d\nu
\end{equation}
for all  $s\in \N$, $n_1, \ldots, n_s\in \Z$. This identity  is often referred to as the \emph{Furstenberg correspondence principle}.
\item
 We say that the sequence
$z$ has a {\em unique} Furstenberg system,  if  $z$ admits  correlations on $([N])_{N\in\N}$,  or equivalently, if $z$ is generic for a measure along $([N])_{n\in\N}$.
\end{itemize}
\end{definition}	
\begin{remark}
 A sequence $z\colon \Z\to I$ may have several non-isomorphic  Furstenberg systems
depending on which sequence of intervals $\bN$ we use in the evaluation of its   correlations. We call any such system {\em a  Furstenberg system of $z$}.
\end{remark}

The following fact is an immediate consequence of the pointwise ergodic theorem.
\begin{proposition}\label{P:ergodic}
	Let $(X,\mu,T)$ be a system and suppose that   $\mu=\int \mu_x\, d\mu(x)$ is the ergodic decomposition of $\mu$. Then for every  real valued $f\in L^\infty(\mu)$
	and for almost every $x\in X$, the sequence $(f(T^nx))$ has a unique  Furstenberg system that is ergodic and  a factor of the system $(X,\mu_x,T)$.
\end{proposition}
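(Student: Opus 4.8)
The plan is to obtain all three conclusions — unique Furstenberg system, ergodicity, and factor — simultaneously from a single application of the pointwise ergodic theorem, by exploiting the natural equivariant coding map. Fix a bounded closed interval $I\subset\R$ containing the values of a chosen representative of $f$, and let $\Omega:=I^\Z$, the shift $\sigma$, and $F_0$ be as in the construction preceding Definition~\ref{D:correspondence}. Define $\pi\colon X\to\Omega$ by $\pi(x):=(f(T^nx))_{n\in\Z}$. Then $\pi$ is measurable, $F_0\circ\pi=f$, and the coding is equivariant: $\pi\circ T=\sigma\circ\pi$. For each $x$ set $\nu_x:=\pi_*\mu_x$, the pushforward of the ergodic component $\mu_x$ under $\pi$. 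By equivariance together with the definition of $\nu_x$, the map $\pi$ is a factor map from $(X,\mu_x,T)$ onto $(\Omega,\nu_x,\sigma)$ for every $x$ for which $(X,\mu_x,T)$ is ergodic, which is the case for a.e. $x$.

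First I would establish genericity. Since $\Omega$ is a compact metric space, $\CC(\Omega)$ is separable; fix a countable dense subset $\{F_j\}_{j\in\N}$. For each fixed $j$, applying the pointwise ergodic theorem to $F_j\circ\pi\in L^\infty(\mu)$ produces a full-measure set on which
$$
\frac{1}{N}\sum_{n=1}^N F_j(\sigma^n\pi(x))=\frac{1}{N}\sum_{n=1}^N (F_j\circ\pi)(T^nx)\longrightarrow \E_\mu(F_j\circ\pi\mid\CI(T))(x),
$$
and the defining property of the ergodic decomposition identifies this limit with $\int F_j\circ\pi\,d\mu_x=\int F_j\,d\nu_x$. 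Intersecting these countably many full-measure sets gives a single set $X_0$ with $\mu(X_0)=1$ on which $\frac1N\sum_{n=1}^N F_j(\sigma^n\pi(x))\to\int F_j\,d\nu_x$ for every $j$; a routine approximation argument, using the uniform bound $\sup_\Omega|F|$ together with density, upgrades this to all $F\in\CC(\Omega)$. Hence for every $x\in X_0$ the point $\pi(x)$ is generic for $\nu_x$ along $([N])_{N\in\N}$.

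Specializing to the continuous functions $F=\prod_{j=1}^s\sigma^{n_j}F_0$ then shows that the sequence $(f(T^nx))=\pi(x)$ admits correlations on $([N])_{N\in\N}$ in the sense of \eqref{E:CorDef}, i.e.\ it has a \emph{unique} Furstenberg system, and by construction that system is exactly $(\Omega,\nu_x,\sigma)$. The factor claim is then immediate from the first paragraph, since $\pi$ is a factor map from $(X,\mu_x,T)$ to $(\Omega,\nu_x,\sigma)$. Finally, for a.e. $x$ the system $(X,\mu_x,T)$ is ergodic, and a factor of an ergodic system is ergodic, so $(\Omega,\nu_x,\sigma)$ is ergodic as well.

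I do not expect a substantive obstacle, as the proposition is essentially a repackaging of the pointwise ergodic theorem. The one point that genuinely requires care is the interchange of the quantifiers ``for each $F$, for a.e.\ $x$'' into ``for a.e.\ $x$, for all $F$'': this is precisely where separability of $\CC(\Omega)$ enters, and one must also check that the exceptional null sets can be arranged compatibly with the ergodic decomposition so that the ergodic averages are identified with $\int F\,d\nu_x$ on a common full-measure set.
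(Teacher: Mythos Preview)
Your proof is correct and is precisely the argument the paper has in mind: the paper states only that the result ``is an immediate consequence of the pointwise ergodic theorem'' and records that the factor map is $x\mapsto (f(T^nx))_{n\in\Z}$, which is exactly your $\pi$. Your write-up simply fills in the routine details (separability of $\CC(\Omega)$ to swap quantifiers, identification of the ergodic-theorem limit with $\int F\,d\nu_x$) that the paper omits.
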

(The factor map is the map $x\mapsto (f(T^nx))_{n\in\Z}$ from $X$ to $\Omega$.)
\begin{remark}
	  It follows that if $(X,\mu,T)$ has zero entropy, then
	for almost every $x\in X$, the Furstenberg system of the sequence $(f(T^nx))$ has zero entropy.
	\end{remark}

\section{Furstenberg systems of polynomial iterates}\label{S:3}
Let $(X,\mu,T)$ be a system, $f\in L^\infty(\mu)$, and $p\in \Z[t]$ with $\deg(p)\geq 2$.
In this section we are going to establish a crucial property that Furstenberg systems of sequences of the form $(f(T^{p(n)}x))$ satisfy.
A key ingredient is the following result.
\begin{theorem}[\cite{HK2}]
\label{T:HK}
	Let $p\in \Z[t]$ be a polynomial with $\deg(p)\geq 2$.
Let also $(X,\mu,T)$ be an ergodic system, $\ell\in\N$, and  $f_1,\ldots, f_\ell \in L^\infty(\mu)$. Then the limit
$$
\lim_{N\to\infty}\frac{1}{N}\sum_{n=1}^NT^{p(n+1)}f_1\cdot\ldots\cdot T^{p(n+\ell)}f_\ell
$$
exists in $L^2(\mu)$ and it is zero if  $\E(f_j|\CZ_\infty(T))=0$ for some $j\in \{1,\ldots, \ell\}$.
\end{theorem}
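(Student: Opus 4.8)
The plan is to prove two things in sequence: first that a finite-level Host--Kra factor $\CZ_k(T)$, with $k=k(\ell,\deg p)$, is characteristic for these averages (which yields the vanishing assertion), and then that the limit exists after reducing to this characteristic factor. For the vanishing statement, note that since $\CZ_\infty(T)=\bigvee_{k}\CZ_k(T)$ and the factors $\CZ_k(T)$ are nested, the hypothesis $\E(f_j|\CZ_\infty(T))=0$ forces $\E(f_j|\CZ_k(T))=0$, equivalently $\nnorm{f_j}_{\mu,k+1}=0$, for \emph{every} finite $k$. Hence it suffices to produce a single finite $k$ and a constant $C=C(\ell)$ with
$$
\limsup_{N\to\infty}\Bigl\|\frac1N\sum_{n=1}^N T^{p(n+1)}f_1\cdots T^{p(n+\ell)}f_\ell\Bigr\|_{L^2(\mu)}\le C\,\min_{1\le j\le\ell}\nnorm{f_j}_{\mu,k+1},
$$
assuming $\|f_i\|_\infty\le 1$ for all $i$; this simultaneously identifies $\CZ_k(T)$ as characteristic and gives the vanishing.

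The seminorm bound is proved by the van der Corput inequality combined with Bergelson's PET induction. Writing $u_n:=T^{p(n+1)}f_1\cdots T^{p(n+\ell)}f_\ell$, the van der Corput lemma bounds the left-hand limsup above by an average over shifts $h$ of the correlations $\lim_N \frac1N\sum_n\langle u_{n+h},u_n\rangle$. Composing each such inner product with a suitable power of $T$ so that one of the $2\ell$ iterates becomes the identity, one obtains a new average along the polynomial family $\{\,p(n+h+j)-p(n+1),\ p(n+j)-p(n+1)\,\}$, whose members have strictly smaller degree in the relevant variable because $\deg p\ge 2$ makes all the difference polynomials non-constant. One assigns to each such polynomial family its PET complexity (the weight vector recording how many polynomials of each degree occur), checks that each van der Corput step strictly lowers this complexity, and iterates. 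The induction terminates at a family reducible to a single non-constant polynomial, where the average is controlled directly by one Host--Kra seminorm through the defining relation $\nnorm f_{k+1}^{2^{k+1}}=\lim_{H\to\infty}\frac1H\sum_{h=1}^H\nnorm{\bar f\cdot T^h f}_{k}^{2^{k}}$; unwinding the induction converts this into the desired bound by $\nnorm{f_j}_{\mu,k+1}$.

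For the existence of the limit, the characteristic factor property lets us replace each $f_j$ by $\E(f_j|\CZ_k(T))$ and so assume $(X,\mu,T)$ is a system of order $k$. By the structure theorem such an ergodic system is an inverse limit of $k$-step nilsystems, and a routine approximation reduces matters to $X=G/\Gamma$ a single nilsystem with $T=(x\mapsto a\cdot x)$ and the $f_j$ continuous. Then $n\mapsto(a^{p(n+1)}x,\dots,a^{p(n+\ell)}x)$ is a polynomial orbit in the nilmanifold $(G/\Gamma)^\ell=G^\ell/\Gamma^\ell$, so by Leibman's equidistribution theorem for polynomial sequences on nilmanifolds it equidistributes on a sub-nilmanifold; consequently $\frac1N\sum_{n=1}^N\prod_{j=1}^\ell f_j(a^{p(n+j)}x)$ converges (for every $x$, hence in $L^2$) to the integral of $\bigotimes_j f_j$ over that sub-nilmanifold. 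Combined with the characteristic-factor reduction, this gives convergence in $L^2(\mu)$ on the original system.

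The main obstacle is the PET induction: one must define a complexity on the shifted family $\{p(\cdot+j)\}_{j=1}^\ell$ and on all of its van der Corput descendants, and prove that this complexity strictly decreases under each step while the terminal families remain controllable by a single seminorm. A secondary difficulty is bookkeeping: each van der Corput application doubles the number of functions and introduces an auxiliary averaging parameter $h$, so one must keep all but one factor uniformly bounded and isolate the dependence on the distinguished $f_j$; and the equidistribution input must be transferred from a single nilsystem to the inverse limit via approximation, uniformly enough to conclude $L^2$ convergence.
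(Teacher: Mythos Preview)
The paper does not prove this statement; it is quoted from \cite{HK2} and used as a black box (the theorem header carries the citation, and the text immediately after it reads ``We will combine this result with Lemma~\ref{lem:Pinsker}\dots''). So there is no in-paper proof to compare against.

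That said, your sketch is the standard route taken in \cite{HK2} and the surrounding literature: a van der Corput/PET reduction to bound the averages by a finite-index Host--Kra seminorm, followed by the structure theorem and equidistribution on nilmanifolds to obtain convergence. One small wording issue: the reason a single van der Corput step drops the degree here is not merely that $\deg p\ge 2$ ``makes the difference polynomials non-constant,'' but that all the iterates $p(n+1),\dots,p(n+\ell)$ share the same leading term, so any difference $p(n+h+j)-p(n+1)$ or $p(n+j)-p(n+1)$ has degree at most $\deg p-1$; the hypothesis $\deg p\ge 2$ then guarantees these differences are non-constant so the induction does not degenerate at the first step. With that clarification your outline is correct.
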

We will combine  this result with Lemma~\ref{lem:Pinsker} in order to deduce the following.
\begin{proposition}\label{P:PolynomialIterates}
	Let $p\in \Z[t]$ be a polynomial with $\deg(p)\geq 2$,  $(X,\mu,T)$ be a system, and $f\in L^\infty(\mu)$ be real valued.
	Then every strictly increasing sequence of positive integers $(N_k)$ has a subsequence $(N_k')$ such that  for almost every $x\in X$,  the sequence $(f(T^{p(n)}x))$ admits correlations  on $\bN':=([N'_k])_{k\in\N}$.  Moreover, if $\E(f|\CZ_\infty(T))=0$, then  the $0^{\text{th}}$-coordinate projection  $F_0\colon\Omega\to\R$ has zero conditional expectation on the Pinsker factor of the corresponding Furstenberg system $(\Omega,\nu_x,\sigma)$.
\end{proposition}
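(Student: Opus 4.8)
The plan is to derive both assertions from Theorem~\ref{T:HK}, applied on the ergodic components of $(X,\mu,T)$, combined with Lemma~\ref{lem:Pinsker} applied to the Furstenberg systems. For a tuple $\tau=(s;n_1,\dots,n_s)$ with $s\in\N$ and $n_1,\dots,n_s\in\Z$, write
$$
B_N^\tau(x):=\frac1N\sum_{n=1}^N\prod_{j=1}^s f(T^{p(n+n_j)}x),
$$
so that the correlation \eqref{E:CorDef} of the sequence $(f(T^{p(n)}x))$ is exactly $\lim_k B_{N_k'}^\tau(x)$. The key preliminary reduction is that, after grouping coinciding indices $n_j$ into powers of $f$, padding the missing shifts with the constant function $\one$, and translating the summation index so that the smallest shift becomes the leading one (which changes the Cesàro average by $o(1)$), the average $B_N^\tau$ becomes one of the averages $\frac1N\sum_{n=1}^N T^{p(n+1)}g_1\cdots T^{p(n+\ell)}g_\ell$ of Theorem~\ref{T:HK}, where each $g_i$ is a power of $f$ or equals $\one$. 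Hence on each ergodic component $(X,\mu_x,T)$ the averages $B_N^\tau$ converge in $L^2(\mu_x)$. Since they are uniformly bounded by $\norm{f}_\infty^{s}$ and $\norm{B_N^\tau-B_M^\tau}_{L^2(\mu)}^2=\int\norm{B_N^\tau-B_M^\tau}_{L^2(\mu_x)}^2\,d\mu(x)$, dominated convergence over the ergodic decomposition $\mu=\int\mu_x\,d\mu(x)$ upgrades this to convergence of $B_N^\tau$ in $L^2(\mu)$; call the limit $B^\tau$.

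For the first assertion I would enumerate the countably many tuples $\tau$ and run a diagonal argument. Given $(N_k)$, each $B_N^\tau$ converges in $L^2(\mu)$, hence converges a.e.\ along some subsequence; passing to nested subsequences and diagonalising produces $(N_k')$ along which $B_{N_k'}^\tau(x)$ converges for a.e.\ $x$, simultaneously for all $\tau$. For every $x$ outside the resulting countable union of null sets, all correlations of $(f(T^{p(n)}x))$ exist along $\bN'=([N_k'])$, which is exactly the statement that this sequence admits correlations on $\bN'$; denote the associated Furstenberg system by $(\Omega,\nu_x,\sigma)$.

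For the second assertion assume $\E(f|\CZ_\infty(T))=0$. As in the proof of Theorem~\ref{th:WW}, this gives $\nnorm f_{\mu,k+1}=0$ for all $k$, whence $\nnorm f_{\mu_x,k+1}=0$ for a.e.\ $x$ and all $k$, i.e.\ $\E_{\mu_x}(f|\CZ_\infty(T))=0$ for a.e.\ $x$. Now fix a tuple of the special form $\tau=(\ell+1;0,m_1,\dots,m_\ell)$ with $\ell\geq0$ and $m_1,\dots,m_\ell\in\N$; these are precisely the correlations appearing in the hypothesis of Lemma~\ref{lem:Pinsker} for $(\Omega,\nu_x,\sigma)$ and $F_0$. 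The crucial point is that, since every $m_j\geq1$, the index $0$ occurs with multiplicity one, so in the reduction above its distinguished factor is $f$ itself (a single power, not $f^c$). Theorem~\ref{T:HK}, applied on each component using $\E_{\mu_x}(f|\CZ_\infty(T))=0$, then forces $B_N^\tau\to0$ in $L^2(\mu_x)$, hence $B^\tau=0$ in $L^2(\mu)$, so $B^\tau(x)=0$ for a.e.\ $x$. By the correspondence principle \eqref{E:correspondence},
$$
\int F_0\cdot\prod_{j=1}^\ell\sigma^{m_j}F_0\,d\nu_x=\lim_{k\to\infty}B_{N_k'}^\tau(x)=B^\tau(x)=0
$$
for a.e.\ $x$ and all such $\tau$. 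Since $F_0$ is real valued and bounded, Lemma~\ref{lem:Pinsker} applied to $(\Omega,\nu_x,\sigma)$ yields $\E_{\nu_x}(F_0|\Pi(\sigma))=0$, as required.

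I expect the main obstacle to be exactly this multiplicity observation. Theorem~\ref{T:HK} only guarantees vanishing when some factor $g_i$ has zero conditional expectation on $\CZ_\infty(T)$, and we have no control over $\E(f^c|\CZ_\infty(T))$ for $c\geq2$. The formulation of Lemma~\ref{lem:Pinsker}, which isolates a single distinguished occurrence of $f$ at index $0$ while allowing the remaining factors to collapse into uncontrolled powers of $f$, is precisely what lets us match the product to the hypothesis of Theorem~\ref{T:HK}; this matching is the heart of the argument. The remaining technical points — the $o(1)$ shift of the summation range in the reduction and the passage from ergodic components to $(X,\mu,T)$ via dominated convergence — are routine.
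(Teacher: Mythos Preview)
Your proof is correct and follows the same approach as the paper: Theorem~\ref{T:HK} for $L^2$ convergence (and vanishing when $\E(f\mid\CZ_\infty(T))=0$), a diagonal extraction for a.e.\ convergence along a subsequence, the correspondence principle~\eqref{E:correspondence}, and Lemma~\ref{lem:Pinsker}. You are in fact more careful than the paper about two points it leaves implicit: the ergodic decomposition (Theorem~\ref{T:HK} is stated only for ergodic systems) and the translation/padding/grouping needed to match the correlation averages to the shift pattern $p(n+1),\dots,p(n+\ell)$ of Theorem~\ref{T:HK}, including the multiplicity observation that the distinguished factor at shift $0$ is $f$ itself.
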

\begin{remarks}	
$\bullet$ By taking a suitable  countable dense collection  of functions  in $L^2(\mu)$  and using a diagonal and an approximation argument, we
can choose the subsequence $(N_k')$ independently of the function $f$. We cannot choose it independently of the system or the full measure set of $x\in X$ though.

$\bullet$ It seems likely that for almost every $x\in X$ the Furstenberg systems of
 $(f(T^{p(n)}x))$   along  $\bN':=([N'_k])_{k\in\N}$ are ergodic and isomorphic to  direct products of  infinite-step nilsystems and Bernoulli systems (see \cite[Problem~1]{F21}). Proving this seems  non-trivial though.
	\end{remarks}

\begin{proof}
By Theorem~\ref{T:HK}, for all $s\in\N$ and  $h_1,\dots,h_s\in\Z$ (not necessarily distinct) the limit
\begin{equation}
\label{eq:limit}
\lim_{k\to\infty}\frac 1{N_k}\sum_{n=1}^{N_k} \prod_{j=1}^s f(T^{p(n+h_j)}x)
\end{equation}
 exists in $L^2(\mu)$.
Since mean convergence of a sequence of functions implies pointwise almost everywhere convergence along a subsequence, we can use a diagonal argument to find a subsequence $(N_k')$ of $(N_k)$ such that for a subset $X_0$ of $X$ with full measure and for $x\in X_0$ all the limits~\eqref{eq:limit} exist. This means that for $x\in X_0$ the sequence $(f(T^{p(n)}x))$ admits correlations on the sequence of intervals $\bN'=([N_k'])_{k\in\N}$, which proves
the first assertion.

  For $x\in X_0$ let $(\Omega,\nu_x,\sigma)$ be the associated Furstenberg system and $F_0\colon \Omega\to\R$ be the $0^{\text{th}}$-coordinate projection.
Note also that  the limit
\begin{equation}
	\label{eq:limit2}
	\lim_{k\to\infty}\frac{1}{N'_k}\sum_{n=1}^{N'_k}f(T^{p(n)}x)\cdot \prod_{j=1}^\ell f(T^{p(n+n_j)}x)
\end{equation}
exists for all $x\in X_0$, $\ell\geq 0$,  and  $n_1,\ldots, n_\ell\in \N$ (not necessarily distinct).
 Suppose  that  $\E(f|\CZ_\infty(T))=0$. Then by  Theorem~\ref{T:HK}  we have that for every $x$ in
a full measure subset $X_1$ of $X_0$ the limit~\eqref{eq:limit2} is equal to $0$.
Using the correspondence principle~\eqref{E:correspondence} we  get that
$$
\int F_0 \cdot  \prod_{j=1}^\ell \sigma^{n_j}F_0\,  d\nu_x=0
$$
for all $\ell\geq 0$  and  $n_1,\ldots, n_\ell\in \N$.

By Lemma~\ref{lem:Pinsker} this implies that for $x\in X_1$, the function  $F_0$ is orthogonal to the Pinsker factor of the Furstenberg system $(\Omega,\nu_x,\sigma)$. This completes the proof.
\end{proof}
We remark that  if $f$ takes values in $\{-1,1\}$, then the previous argument also gives  that for almost every $x\in X$ the Furstenberg system of $(f(T^{p(n)}x))$ on $([N_k'])_{k\in\N}$ is Bernoulli. But this is no longer necessary if say
$f$ takes values in $\{0,\pm 1\}$. To see this, consider the ergodic system defined by the transformation $T\colon \T^2\to \T^2$ given by $T(x,y):=(x+\alpha,2y)$, $x,y\in \T$,  where $\alpha$ is irrational. We cut the unit square in four  square pieces in the natural way, and let $f$  be $1$ on the first, $-1$ on the fourth, and $0$ on the other two vertically adjacent squares. Then $f$ is orthogonal to the $\CZ_\infty(T)$-factor. Suppose that the Furstenberg system of  $(f(T^{n^2}x))$ was Bernoulli.  Then  the Furstenberg system of  $(f^2(T^{n^2}x))$ would be a factor of this system.  But $f^2$ is a non-trivial measurable function  with respect to the Kronecker factor of the system $(X,\mu, T)$. This easily implies that  the Furstenberg system of $(f^2(T^{n^2}x))$ is a non-trivial factor of the affine system $(x,y)\mapsto (x+\alpha,y+x)$, and any such factor cannot be a factor of a Bernoulli system (since it is not even weak mixing).

\section{Convergence}\label{S:4}
\label{sec:convergence}
The goal of this section is to prove  Theorem~\ref{th:convergence} and to determine some convenient (but not optimal) characteristic factors for the mean convergence of the averages \eqref{eq:lim-main}. We do this in Proposition~\ref{prop:main} below.
\subsection{A partial  characteristic factor} Our first step is to combine Proposition~\ref{prop:Pinsker} with Proposition~\ref{P:PolynomialIterates}  in order to  establish  the following result via a disjointness argument.
\begin{proposition}
\label{prop:char3}
		Let $(X,\CX,\mu)$ be a probability space and $T,S\colon X\to X$ be measure preserving transformations such that the system $(X,\mu,T)$ has zero entropy. Let also   $p\in \Z[t]$ be a polynomial with $\deg(p)\geq 2$ and $f,g\in L^\infty(\mu)$ with  $\E(g|\CZ_\infty(S))=0$. Then
\begin{equation}\label{E:zero}
\lim_{N\to\infty} \frac{1}{N}\sum_{n=1}^N T^nf \cdot  S^{p(n)}g =0
\end{equation}
in  $L^2(\mu)$.
\end{proposition}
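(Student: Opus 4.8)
The plan is to deduce the $L^2$-convergence to zero from an almost-everywhere pointwise statement for each fixed $x$, packaged through the subsequence principle and dominated convergence. Writing $a_N:=\frac1N\sum_{n=1}^N T^nf\cdot S^{p(n)}g$ and noting the uniform bound $\|a_N\|_\infty\leq \|f\|_\infty\|g\|_\infty$, it suffices to show that every sequence of intervals $([N_k])$ has a subsequence $([N_k'])$ along which $a_{N_k'}(x)\to 0$ for $\mu$-a.e.\ $x$: dominated convergence then gives $\|a_{N_k'}\|_{L^2(\mu)}\to 0$, and since every subsequence admits such a further subsequence, $a_N\to 0$ in $L^2(\mu)$. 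Since $\CZ_\infty(S)$ is a (real) factor, $\E(\,\cdot\,|\CZ_\infty(S))$ commutes with taking real and imaginary parts, so $\E(g|\CZ_\infty(S))=0$ passes to $\reel g$ and $\Im g$; splitting $f$ as well, I may assume $f,g$ are real valued.

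Fix $([N_k])$. I would apply Proposition~\ref{P:PolynomialIterates} to $(X,\mu,S)$ and $g$ to obtain a subsequence $\bN'=([N_k'])$ and a full-measure set $X_1\subseteq X$ such that, for every $x\in X_1$, the sequence $(g(S^{p(n)}x))$ admits correlations along $\bN'$, with associated Furstenberg system $(\Omega_2,\nu_x,\sigma)$ whose coordinate projection $F_0^{(2)}$ has zero conditional expectation on the Pinsker factor $\Pi(\sigma)$ of $(\Omega_2,\nu_x,\sigma)$. Shrinking $X_1$ by means of Proposition~\ref{P:ergodic} and the remark after it, I may also assume that for each $x\in X_1$ the sequence $(f(T^nx))$ is generic for its unique Furstenberg system $(\Omega_1,\nu^x_1,\sigma)$, which has \emph{zero entropy} because $(X,\mu,T)$ does.

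Now fix $x\in X_1$ and set $u(n):=f(T^nx)$, $v(n):=g(S^{p(n)}x)$, viewed as points $\tilde u\in\Omega_1$, $\tilde v\in\Omega_2$; the goal is $\frac1{N_k'}\sum_{n=1}^{N_k'}u(n)v(n)\to 0$. I would argue once more by the subsequence principle: along any further subsequence of $\bN'$, compactness of $\Omega_1\times\Omega_2$ yields a weak-$*$ limit $\lambda$ of the empirical measures $\frac1N\sum_{n=1}^N\delta_{(\sigma\times\sigma)^n(\tilde u,\tilde v)}$. A routine computation shows $\lambda$ is $\sigma\times\sigma$-invariant; since marginals are weak-$*$ continuous and $\tilde u,\tilde v$ are generic (the former along every subsequence, the latter along $\bN'$) for $\nu^x_1,\nu_x$ respectively, $\lambda$ is a \emph{joining} of $(\Omega_1,\nu^x_1,\sigma)$ and $(\Omega_2,\nu_x,\sigma)$. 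With $H(a,b):=F_0^{(1)}(a)\,F_0^{(2)}(b)\in\CC(\Omega_1\times\Omega_2)$ and $u(n)v(n)=H\big((\sigma\times\sigma)^n(\tilde u,\tilde v)\big)$, weak-$*$ convergence gives $\frac1N\sum_{n=1}^N u(n)v(n)\to\int H\,d\lambda$ along the sub-subsequence.

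It remains to see this limit is always $0$, and this is exactly where the two hypotheses meet: the first marginal $(\Omega_1,\nu^x_1,\sigma)$ has zero entropy, while on the second marginal $F_0^{(2)}$ has zero conditional expectation on the Pinsker factor. Proposition~\ref{prop:Pinsker}, applied to $\lambda$, then yields $\int F_0^{(1)}(a)F_0^{(2)}(b)\,d\lambda(a,b)=0$. As every further subsequence of $\bN'$ produces such a joining, all with the same two marginals and the same two properties, I conclude $\frac1{N_k'}\sum_{n=1}^{N_k'}u(n)v(n)\to 0$ for each $x\in X_1$, which finishes the argument. The main obstacle I anticipate is the bookkeeping around $\lambda$: verifying invariance and that its marginals are precisely the two Furstenberg systems, so that Proposition~\ref{prop:Pinsker} applies with the zero-entropy factor on one side and the Pinsker-orthogonal projection on the other; the remaining subsequence and dominated-convergence steps are routine.
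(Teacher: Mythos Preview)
Your proposal is correct and follows essentially the same approach as the paper: reduce to real-valued $f,g$, use Proposition~\ref{P:PolynomialIterates} to get a Furstenberg system for $(g(S^{p(n)}x))$ whose coordinate projection is Pinsker-orthogonal, use Proposition~\ref{P:ergodic} to get a zero-entropy Furstenberg system for $(f(T^nx))$, pass to a joining via weak-$*$ compactness, and apply Proposition~\ref{prop:Pinsker}. The only difference is cosmetic: the paper packages the argument as a proof by contradiction at both the $L^2$ and pointwise levels, whereas you phrase it directly via the subsequence principle twice; the mathematical content and all key ingredients are identical.
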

(The case where $\E(f|\CZ_\infty(T))=0$ will be treated in the next subsection.)
\begin{proof} Note first that  our assumption is satisfied for the real  and imaginary part of $g$ in place of $g$. Hence,
	we can restrict  to the case where the functions $f$ and $g$ are real valued with range on a closed bounded interval $I$.

Arguing by contradiction, suppose that the conclusion fails. Then there exist  $\varepsilon>0$ and  $N_k\to \infty$ such that
\begin{equation}\label{E:varepsilon}
\norm{\frac 1{N_k}\sum_{n=1}^{N_k} T^nf\cdot  S^{p(n)}g}_{L^2(\mu)}\geq \varepsilon
\end{equation}
for every $k\in \N$.
By  Proposition~\ref{P:PolynomialIterates} there exists a subsequence $(N_k')$ of $(N_k)$ such that for $\mu$-almost every $x\in X$  the  sequence $(g(S^{p(n)}x))$ admits correlations on $\bN':=([N'_k])$.
Let $(\Omega,\nu_x,\sigma)$ be the corresponding Furstenberg system. Moreover, since $\E(g|\CZ_\infty(S))=0$,
  by Proposition~\ref{P:PolynomialIterates} again we have that for  $\mu$-almost every $x\in X$  the  $0^{\text{th}}$-coordinate projection $F_0\colon\Omega\to\R$  satisfies $\E_{\nu_x}(F_0| \Pi(\Omega,\nu_x,\sigma))=0$.


Furthermore,  by Proposition~\ref{P:ergodic}, we have that for $\mu$-almost  every $x\in X$ the sequence  $(f(T^nx))$ admits correlations on the sequence of intervals $\bN'$ and the
	corresponding Furstenberg system $(\Xi,\nu'_x,\sigma)$ is ergodic and has zero entropy. We write $G_0\colon\Xi\to\R$ for the $0^{\text{th}}$-coordinate projection.
	
Let $X_0$ be  a subset of $X$ with $\mu(X_0)=1$  and such  that the previous properties hold for all $x\in X_0$.
	We claim that 
\begin{equation}\label{E:zero'}
\lim_{k\to\infty} \frac 1{N'_k}\sum_{n=1}^{N'_k} f(T^nx)\cdot  g(S^{p(n)}x)=0 \quad \text{ for almost every } x\in X_0.
	\end{equation}
	If this is shown, then we get a contradiction from \eqref{E:varepsilon} using the bounded convergence theorem.
	
	So suppose that \eqref{E:zero'} fails. Then  there exists a subset $X_1$ of $X_0$ with $\mu(X_1)>0$ such that for every  $x\in X_1$ there exists a subsequence $(N_{x,k}')$ of $(N_k')$  such that
\begin{equation}\label{E:nonzero}
	\lim_{k\to\infty} \frac 1{N'_{x,k}} \sum_{n=1}^{N'_{x,k}} f(T^nx)\cdot  g(S^{p(n)}x)\quad
	\text{ exists and is non-zero.}
\end{equation}

Let $x\in X_1$ be fixed for the moment.
Let $w,z\in I^\Z$ be defined by $w(n):=f(T^nx)$ and $z(n):=g(S^{p(n)}x)$ for $n\in\Z$. The sequence $(N'_{x,k})$ admits  a subsequence $(N''_{x,k})$ along which the point  $(w,z)$ of $I^\Z\times I^\Z$ is generic on $(I^\Z\times I^\Z,\sigma\times\sigma)$ for some measure $\rho_x$ on this space. This means that for every continuous function $\Phi$ on $I^\Z\times I^\Z$ we have
$$
\lim_{k\to\infty}\frac 1{N''_{x,k}}\sum_{n=1}^{N''_{x,k}} \Phi(\sigma^nw,\sigma^nz)=\int\Phi\,d\rho_x.
$$
In particular, since $F_0$ is continuous on $\Omega$ and $G_0$ is continuous on $\Xi$, and since $G_0(\sigma^nw)=f(T^nx)$ and $F_0(\sigma^nz)=g(S^{p(n)}x)$, $n\in \N$, we obtain
\begin{equation}
\label{eq:generic}
\lim_{k\to\infty}\frac 1{N''_{x,k}}\sum_{n=1}^{N''_{x,k}} f(T^nx)\cdot g(S^{p(n)}x) =\int G_0\otimes F_0\,d\rho_x.
\end{equation}

Note that the measure $\rho_x$ is invariant under $\sigma\times\sigma$. Furthermore, since $w$ is generic for the measure $\nu'_x$ on $(\Xi,\sigma)$ along $([N])$ and since $z$ is generic for the measure $\nu_x$ on $(\Omega,\sigma)$ along $([N'_k])$,  and since $([N''_{x,k}])$ is a subsequence of $([N'_k])$, the two coordinate projections of $\rho_x$ are $\nu'_x$ and $\nu_x$ respectively, and $\rho_x$ is a joining of the systems $(\Xi,\nu'_x,\sigma)$ and $(\Omega,\nu_x,\sigma)$.

Recall that for $\mu$-almost every $x\in X_1$ we have $\E_{\nu_x}(F_0|\Pi(\Omega,\nu_x,\sigma))=0$ and the system $(\Xi,\nu'_x,\sigma)$ has entropy zero.
By Proposition~\ref{prop:Pinsker} we have
$$
\int G_0\otimes F_0\,d\rho_x=0
$$ for $\mu$-almost every $x\in X_1$. Taken together with  identity~\eqref{eq:generic}, this gives  a contradiction by~\eqref{E:nonzero} (recall that $(N''_{k,x})$ is a subsequence of $(N'_{k,x})$). This proves~\eqref{E:zero'} and finishes the proof.
\end{proof}

\subsection{Convergence}
\label{subsec:convergence}
We are now ready to prove Theorem~\ref{th:convergence}. For convenience we restate it and also record some additional information that will be needed later.
\begin{proposition}
\label{prop:main}
Let $T,S$ be measure preserving transformations acting on a probability space $(X,\CX,\mu)$  such that the system $(X,\mu,T)$ has zero entropy.  Let also   $p\in \Z[t]$ be a polynomial with $\deg(p)\geq 2$.
Then for every  $f,g\in L^\infty(\mu)$ the limit
$$
\lim_{N\to\infty} \frac{1}{N}\sum_{n=1}^NT^nf \cdot  S^{p(n)}g
$$
exists in  $L^2(\mu)$.  Moreover, the  limit is $0$ if either
$\E_\mu(f|\CZ_\infty(T))=0$ or $\E_\mu(g|\CZ_\infty(S))=0$.
\end{proposition}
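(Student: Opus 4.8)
The plan is to combine the partial characteristic factor from Proposition~\ref{prop:char3}, which identifies $\CZ_\infty(S)$ as a characteristic factor for the second variable, with the Nilsequence Wiener--Wintner theorem (Theorem~\ref{th:WW}), which controls the first variable for every \emph{fixed} nilsequence. Write $A_N := \frac1N\sum_{n=1}^N T^nf\cdot S^{p(n)}g$. The first observation is a uniform bound: by the Cauchy--Schwarz inequality in $n$ and the fact that $S$ preserves $\mu$,
$$
\|A_N\|_{L^2(\mu)}^2 \le \|f\|_{L^\infty(\mu)}^2\int \frac1N\sum_{n=1}^N |g(S^{p(n)}x)|^2\,d\mu(x) = \|f\|_{L^\infty(\mu)}^2\,\|g\|_{L^2(\mu)}^2 ,
$$
so $g\mapsto A_N$ is bounded on $L^2(\mu)$ uniformly in $N$. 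Consequently the set of $g$ for which $(A_N)$ converges in $L^2(\mu)$ is closed, and the assignment $g\mapsto\lim_N A_N$ (once defined) is continuous. Applying Proposition~\ref{prop:char3} to $g-\E(g|\CZ_\infty(S))$ shows that this part contributes nothing to the limit; this already settles the vanishing statement when $\E(g|\CZ_\infty(S))=0$, and reduces existence of the limit to the case where $g$ is measurable with respect to $\CZ_\infty(S)$.

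Next I would reduce to a dense class of such $g$ for which $(g(S^{p(n)}x))$ is a genuine nilsequence for almost every $x$. Since $\CZ_\infty(S)=\bigvee_{k\in\N}\CZ_k(S)$, the uniform bound above lets me assume $g$ is measurable with respect to $\CZ_k(S)$ for some fixed $k$. Passing to the ergodic decomposition of $(X,\mu,S)$ and invoking the structure theorem for ergodic systems of order $k$ (inverse limits of $k$-step nilsystems), I can further approximate $g$ in $L^2(\mu)$ by functions coming from nilsystem factors; on such a factor $G/\Gamma$ with nilrotation by $a$ and factor map $\pi$, the orbit $n\mapsto g(S^{p(n)}x)=\tilde g(a^{p(n)}\cdot\pi(x))$ is a polynomial orbit on a nilmanifold and hence, by the Leibman-type fact that polynomial nilsequences are nilsequences, a nilsequence for almost every $x$.

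With this in hand the conclusion follows from Theorem~\ref{th:WW} for $g$ in the dense class of the previous paragraph. Fix the universal full-measure set $X_0\subseteq X$ that Theorem~\ref{th:WW} provides for $(X,\mu,T)$ and $f$; crucially it does not depend on the nilsequence. For $x\in X_0$ I apply the theorem with the ($x$-dependent, but for such $g$ genuinely a nilsequence) sequence $\Psi_x=(g(S^{p(n)}x))$ to conclude that $\frac1N\sum_{n=1}^N \Psi_x(n)\,f(T^nx)$ converges as $N\to\infty$; bounded convergence upgrades this to convergence of $A_N$ in $L^2(\mu)$. By the closedness and density reductions above, $A_N$ converges for every $\CZ_\infty(S)$-measurable $g$, and hence—after adding back the orthogonal part, which contributes $0$—for every $g\in L^\infty(\mu)$. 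If moreover $\E_\mu(f|\CZ_\infty(T))=0$, the second assertion of Theorem~\ref{th:WW} forces this pointwise limit to be $0$ for every $x\in X_0$, so $A_N\to0$ in $L^2(\mu)$ for $g$ in the dense class, and by continuity for all $\CZ_\infty(S)$-measurable $g$; combined with Proposition~\ref{prop:char3} this establishes the vanishing statement when $\E_\mu(f|\CZ_\infty(T))=0$.

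I expect the main obstacle to be the step asserting that $(g(S^{p(n)}x))$ is a nilsequence for almost every $x$ in the \emph{non-ergodic} setting. This requires a measurable version of the structure theorem so that the nilfactor and nilrotation depend measurably on the ergodic component, an $L^\infty$-to-continuous approximation on these nilfactors compatible with the uniform bound, and Leibman's reduction of polynomial orbits $a^{p(n)}\cdot y$ to genuine linear nilsequences. The remaining inputs—the uniform bound, the closedness and density reductions, and the application of Theorem~\ref{th:WW} with a varying but admissible $\Psi_x$ on the universal set $X_0$—are routine once this structural point is secured.
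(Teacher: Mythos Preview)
Your proposal is correct and follows essentially the same route as the paper: reduce via Proposition~\ref{prop:char3} to $g$ measurable with respect to $\CZ_k(S)$, approximate by functions whose $S$-orbit is a genuine nilsequence, upgrade to the polynomial iterate via Leibman, and then apply Theorem~\ref{th:WW} on its universal full-measure set. The one place where the paper is more efficient is exactly the obstacle you flag: rather than passing to the ergodic decomposition and assembling a measurable structure theorem by hand, the paper invokes \cite[Proposition~3.1]{CFH} (also \cite[Chapter~16, Theorem~10]{HK-book}), which directly provides, for any (not necessarily ergodic) system and any $\CZ_k(S)$-measurable $g$, a function $\wt g$ with $\norm{g-\wt g}_{L^1(\mu)}<\ve$ such that $(\wt g(S^nx))$ is a $k$-step nilsequence for $\mu$-almost every $x$; then \cite[Proposition~3.14]{L} gives that $(\wt g(S^{p(n)}x))$ is a nilsequence as well. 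This black box is precisely the ``measurable version of the structure theorem'' you ask for, and with it in hand the rest of your argument goes through verbatim.
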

\begin{proof}
We first prove mean convergence. By Proposition~\ref{prop:char3} we can restrict to the case where $g$ is measurable with respect to $\CZ_\infty(S)$ and using an $L^2(\mu)$ approximation argument we can assume that it is measurable with respect to the factor  $\mathcal{Z}_k(S)$ for some $k\in \N$.

By~\cite[Proposition 3.1]{CFH} (see also~\cite[Chapter~16, Theorem~10]{HK-book}), for every $\ve>0$ there exists $\wt g\in L^\infty(\mu)$ such that
\begin{enumerate}
\item
$\wt g$ is measurable with respect to $\CZ_k(S)$ and $\norm{g-\wt g}_{L^1(\mu)}<\ve$;
\item
\label{it:tilde-g}
for $\mu$-almost every $x\in X$ the sequence $(\wt g(S^nx))$ is a $k$-step nilsequence.
\end{enumerate}
Therefore, it suffices to show the existence in $L^2(\mu)$ of the limit
\begin{equation}
\label{eq:main}
\lim_{N\to\infty}\frac 1N\sum_{n=1}^N \wt g(S^{p(n)}x)\cdot f(T^nx),
\end{equation}
where $\wt g$ satisfies~\eqref{it:tilde-g}, and that this limit is $0$ if $\E_\mu(f|\CZ_\infty(T))=0$.

By~\cite[Proposition 3.14]{L}, for $\mu$-almost every $x\in X$ the sequence $(\wt g(S^{p(n)}x))$ is an $\ell$-step nilsequence for some $\ell=\ell(k,p)$.
Let $X_0$ be the  full measure subset of $X$ for which this property holds.
Let also $X_1$ be the subset of $X$ associated with $f$ by Theorem~\ref{th:WW}. Then for every $x\in X_0\cap X_1$ the limit~\eqref{eq:main}
exists. It follows that this limit exists in $L^2(\mu)$ and the first part of the proposition is proved.
Furthermore,
if $\E_\mu(f|\CZ_\infty(T))=0$, then by the second part of Theorem~\ref{th:WW} the limit is equal to $0$ for every $x\in X_0\cap X_1$. This completes the proof.
\end{proof}

\section{The rational Kronecker factor is characteristic}\label{S:5}
The goal of this section is to prove
Theorem~\ref{prop:Rationa-Kro}. Here is a brief sketch of our strategy. Using Proposition~\ref{prop:main} we can reduce matters to the case where the functions $f$ and $g$ are measurable with respect to the factors $\CZ_k(T)$ and $\CZ_k(S)$ respectively, for some $k\in \N$.  Using an ergodic decomposition argument and  the  Structure Theorem~\cite{HK1}  for systems  of order  $k$, we can further reduce matters to proving
the equidistribution property for nilsystems  stated in Proposition~\ref{P:mainnil}. In order to establish this property we verify first that it holds for a particular class of nilsystems, namely, for ergodic unipotent affine transformations acting on some finite dimensional torus. We do this in  Lemma~\ref{L:TR}  and our starting point is the simple  observation of Lemma~\ref{L:PQR}. We then combine  Lemma~\ref{L:TR}
 with the criterion  in~\cite[Theorem~2.17]{L}, which enables us to reduce equidistribution properties of general nilsystems to the unipotent affine case,  in order to finish the proof of Proposition~\ref{P:mainnil} for general nilsystems.
\subsection{Equidistribution for unipotent affine transformations}
In what follows,  we say that the real numbers $\alpha_1,\dots,\alpha_r$ are {\em rationally independent},  if the equation $n_1\alpha_1+\dots+n_r\alpha_r=0\bmod 1$ has no non-trivial integer solutions.

As usual, for $t\in\R$ we write $\e(t)=\exp(2\pi it)$ and we use the same notation for $t\in\T=\R/\Z$.
For $d\in\N$, $x\in\T^d$, and $k\in\Z^d$, we write $k\cdot x=k_1x_1+\dots+k_dx_d$. Let $u=(u(n))$ be a sequence with values in $\T^d$. Then
by the  Weyl Equidistribution Theorem, this sequence is equidistributed in $\T^d$ if and only if
$$
\lim_{N\to\infty}\frac 1N\sum_{n=1}^N \e\bigl(k\cdot u(n))=0
\quad \text{ for every non-zero }\ k\in\Z^d.
$$
If the sequence $u$ has polynomial coordinates this condition is equivalent to saying that,
for every non-zero $k\in\Z^d$, the polynomial $n\mapsto k\cdot u(n)$ has at least one irrational non-constant coefficient.
%

\begin{lemma}
\label{L:PQR}
Let $P\in \R[t]$ be a non-constant polynomial with rationally independent coefficients,
$Q\in \R[t]$ be arbitrary, and $p\in \Z[t]$ be such that $\deg(p)\geq 2$.
Then the polynomial $P+Q\circ p$ has at least one irrational non-constant coefficient.
\end{lemma}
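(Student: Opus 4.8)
The plan is to argue by contradiction: assume every non-constant coefficient of $P+Q\circ p$ is rational and contradict the rational independence of the coefficients of $P$. First I would record the elementary reformulation of the hypothesis. Writing $P=\sum_{i=0}^d a_it^i$ with $d=\deg P\ge1$, the coefficients $a_0,\dots,a_d$ are rationally independent (in the sense defined above) if and only if $1,a_0,\dots,a_d$ are linearly independent over $\Q$, equivalently the images $\bar a_0,\dots,\bar a_d$ are $\Q$-linearly independent in the $\Q$-vector space $V:=\R/\Q$. All the work will take place in $V$, where the contradiction hypothesis ``$a_j+c_j\in\Q$ for all $j\ge1$'' (with $c_j:=[t^j](Q\circ p)$) becomes $\bar c_j=-\bar a_j$ for $1\le j\le d$ and $\bar c_j=0$ for $j>d$. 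The case $Q$ constant is immediate, so I take $e:=\deg Q\ge1$.

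The one structural fact I would exploit is that $p\in\Z[t]$. Writing $Q=\sum_k b_kt^k$, each coefficient of $Q\circ p$ is an \emph{integer} linear combination $c_j=\sum_k p_{j,k}b_k$, where $p_{j,k}:=[t^j]p(t)^k\in\Z$; hence in $V$ we have $\bar c_j=\sum_k p_{j,k}\bar b_k$, and multiplication by a nonzero integer is injective on $V$. The hypothesis $\deg p=m\ge2$ enters through the resulting degree gaps: $\deg p(t)^k=km$, so the ``blocks'' contributing to $Q\circ p$ are separated by $m\ge2$.

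Using this I would first peel off the top coefficients of $Q$ by a downward induction: for every $k$ with $km>d$ I claim $\bar b_k=0$. Indeed, granting $\bar b_{k'}=0$ for $k'>k$, the degree-$km$ coefficient satisfies $\bar c_{km}=p_m^k\,\bar b_k$ (the indices $k'<k$ do not reach degree $km$, and the higher ones vanish by induction), while $km>d$ forces $\bar c_{km}=0$; injectivity of multiplication by $p_m^k\ne0$ gives $\bar b_k=0$. Consequently the only possibly-irrational part of $Q\circ p$ has degree at most $k_0m\le d$, where $k_0:=\lfloor d/m\rfloor$.

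Finally I would reach a contradiction from the top coefficients, splitting on whether $m\mid d$. If $m\nmid d$ then $k_0m<d$, so no surviving $p(t)^k$ (i.e. $k\le k_0$) reaches degree $d$ and $\bar c_d=0$, whence $\bar a_d=0$, contradicting independence of the $\bar a_i$. If $m\mid d$ then $k_0=d/m\ge1$ and $\bar c_d=p_m^{k_0}\bar b_{k_0}$, so $\bar b_{k_0}=-\bar a_d/p_m^{k_0}\ne0$; now I look one step down at degree $d-1$, which (because $m\ge2$) lies strictly between $(k_0-1)m$ and $k_0m$, so again only the block $k_0$ contributes and $\bar c_{d-1}=p_{d-1,k_0}\bar b_{k_0}$. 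This yields $\bar a_{d-1}=-p_{d-1,k_0}\bar b_{k_0}\in\Q\,\bar a_d$, a nontrivial $\Q$-linear relation between $\bar a_{d-1}$ and $\bar a_d$ contradicting independence (here $d=k_0m\ge2$, so $d-1\ge1$ is a genuine non-constant index). I expect the only delicate point to be this $m\mid d$ case: a single top coefficient does not suffice, and it is precisely the gap $m\ge2$ that both powers the peeling step and guarantees that the degree-$(d-1)$ coefficient is ``clean,'' receiving a contribution only from $b_{k_0}$.
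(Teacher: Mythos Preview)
Your proof is correct. It takes a route different from the paper's, and the comparison is worth noting.

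The paper first reduces to the case where $Q$ has an irrational leading coefficient (by stripping off a top part of $Q$ with rational coefficients), then splits on whether $\deg Q\ge\deg P$ or $\deg Q<\deg P$. In the first case the leading coefficient of $Q\circ p$ survives as the leading coefficient of $P+Q\circ p$ and is irrational; in the second case a $\Q$-dimension count finishes: the non-constant coefficients of $Q\circ p$ lie in the $\Q$-span of $b_1,\dots,b_{\deg Q}$, hence span a space of dimension at most $\deg Q<\deg P$, while the contradiction hypothesis forces that span (modulo~$\Q$) to contain all of $\bar a_1,\dots,\bar a_{\deg P}$.

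You instead work entirely in $V=\R/\Q$ and do an explicit coefficient extraction. Your peeling step (showing $\bar b_k=0$ for $km>d$) subsumes and sharpens the paper's reduction, and your finishing argument replaces the dimension count by reading off one or two specific coefficients: $c_d$ alone if $m\nmid d$, and the pair $c_d,c_{d-1}$ if $m\mid d$, the gap $m\ge 2$ guaranteeing that $c_{d-1}$ receives a contribution only from $b_{k_0}$. This yields a concrete nontrivial $\Q$-relation between $\bar a_{d-1}$ and $\bar a_d$. Your argument is more hands-on and locates the contradiction at an explicit pair of degrees; the paper's dimension argument is shorter to state but less constructive. Both hinge on $\deg p\ge 2$ in essentially the same way: it creates the degree separation that makes either the leading-term or the block-isolation reasoning work.
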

\begin{remark}
Note that the conclusion is false if  $\deg(p)=1$.
\end{remark}
\begin{proof}
If the non-constant coefficients of $Q$ are rational, then the non-constant coefficients of the polynomial $Q\circ p $ are rational and thus the polynomial $P+Q\circ p$ has rationally independent non-constant coefficients and we are done. So suppose that $Q$ has at least one irrational non-constant coefficient. We write $Q(t)=t^{d+1}Q_1(t)+Q_2(t)$ where $d\in \N$,  $Q_1\in\Q[t]$, $\deg(Q_2)=d$,  and $Q_2$ has an irrational leading coefficient.  Then the polynomial $p(t)^{d+1} (Q_1\circ p)(t)$ has rational coefficients. After replacing $P(t)$ with $P(t)+p(t)^{d+1}(Q_1\circ p)(t)$ and $Q$ with $Q_2$, we are reduced to the case where $Q$ has an irrational leading coefficient.

We distinguish two cases. If $\deg(Q)\geq\deg(P)$, then $\deg(Q\circ p)>\deg(Q)\geq\deg(P)$, because $\deg(p)\geq 2$ and $Q$ is non-constant. Hence, the leading coefficient of 	$P+Q\circ p$ coincides with the leading coefficient of  $Q\circ p$, which is irrational and we are done.

Suppose now that $\deg(Q)<\deg(P)$. Arguing by contradiction, suppose that all the non-constant coefficients of $P+Q\circ p$ are rational.
Since the coefficients of $P$ are rationally independent, the linear span over $\Q$ of the non-constant coefficients
of the  polynomial $Q\circ p= -P+(P+Q\circ p)$ has dimension equal to $\deg(P)$ over $\Q$. Furthermore,
the non-constant coefficients of $Q\circ p$ are integer combinations of the non-constant coefficients of $Q$. Hence, the dimension of the span of the non-constant coefficients of the polynomial $Q\circ p$ over $\Q$ is at most $\deg(Q)$, which is strictly smaller than $\deg(P)$ by assumption,  and we have a contradiction. This completes the proof.
\end{proof}
\begin{definition}
A {\em unipotent affine transformation} is a transformation  $T\colon \T^m\to \T^m$ of the form  $Tx=Sx+b$, $x\in \T^m$,  where $b\in \T^{m}$ and $S$ is a unipotent automorphism of $\T^{m}$.
\end{definition}
\begin{lemma}\label{L:independent}
	Let $m\in \N$ and $T\colon\T^{m}\to \T^{m}$ be an ergodic unipotent affine transformation. Then for  almost every $x\in \T^{m}$, with respect to the Lebesgue measure  in $\T^m$,  the following holds: For every non-zero $k\in \Z^m$ the non-constant coefficients of the polynomial
	$n\mapsto k\cdot T^nx$ are rationally independent.
\end{lemma}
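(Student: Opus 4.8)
The plan is to write the orbit $n\mapsto T^nx$ explicitly as a polynomial in $n$ whose coefficients depend affinely on $x$, and then to exclude every possible rational relation among its non-constant coefficients by a countable measure-zero argument, invoking ergodicity only to dispose of the relations that are independent of $x$.

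First I would set $N:=S-I$, an integer nilpotent matrix (say $N^{d+1}=0\neq N^d$), and use unipotence to expand, for $Tx=Sx+b$,
\[
T^nx=\sum_{j\geq 0}\binom{n}{j}N^jx+\sum_{j\geq 0}\binom{n}{j+1}N^jb .
\]
Collecting terms, the coefficient of $\binom{n}{j}$ for $j\geq 1$ is $c_j(x):=N^jx+N^{j-1}b$, while the constant term is $x$. It is cleaner to work in the binomial basis $\{\binom{n}{j}\}$ than in the power basis $\{n^j\}$: the non-constant coefficients of $n\mapsto k\cdot T^nx$ in the two bases are related by an invertible upper-triangular rational matrix, hence span the same $\Q$-vector space, so rational independence of one set is equivalent to that of the other. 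Pairing with $k\in\Z^m$ gives
\[
k\cdot c_j(x)=\big((N^T)^jk\big)\cdot x+\big((N^T)^{j-1}k\big)\cdot b ,
\]
an affine function of $x$ with integer linear part. For fixed $k\neq 0$ I would let $r=r(k)$ be minimal with $(N^T)^rk=0$; then $k\cdot c_j(x)\equiv 0$ for $j>r$, so the genuine non-constant coefficients of $k\cdot T^nx$ are $k\cdot c_1(x),\dots,k\cdot c_r(x)$, the topmost of which, $k\cdot c_r(x)=((N^T)^{r-1}k)\cdot b$, does not depend on $x$.

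Next, fixing $k\neq 0$ and an integer relation $(m_1,\dots,m_r)\neq 0$, I would examine
\[
\Lambda_{k,m}(x):=\sum_{j=1}^r m_j\,k\cdot c_j(x)=v\cdot x+\beta\pmod 1,
\qquad v:=N^Tw,\quad \beta:=w\cdot b,
\]
where $w:=\sum_{j=1}^r m_j(N^T)^{j-1}k$ and one checks the identity $v=N^Tw$. If $v\neq 0$, then $x\mapsto v\cdot x+\beta$ is a nontrivial character plus a constant, so $\{x:\Lambda_{k,m}(x)\in\Z\}$ is a proper subtorus coset of measure zero. If $v=0$, i.e. $N^Tw=0$, the relation is independent of $x$ and I must show $\beta=w\cdot b\notin\Z$. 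A short nilpotency argument shows $w\neq 0$: letting $j_0$ be the least index with $m_{j_0}\neq 0$ and applying $(N^T)^{r-j_0}$ annihilates every term except $m_{j_0}(N^T)^{r-1}k\neq 0$. Thus $w$ is a nonzero integer vector fixed by $S^T$, and the classical ergodicity criterion for affine transformations of the torus yields precisely $w\cdot b\notin\Z$ (one sees this by diagonalizing the Koopman operator on characters: since $S^T$ is unipotent its only finite orbits are fixed points, $\chi_k$ is an eigenfunction with eigenvalue $\e(k\cdot b)$ exactly when $S^Tk=k$, and ergodicity forbids a nonconstant invariant one). So in this case the bad set is empty.

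Finally I would take the countable union over $k\in\Z^m\setminus\{0\}$ and over $(m_1,\dots,m_{r(k)})\in\Z^{r(k)}\setminus\{0\}$ of these null sets; its complement is the desired full-measure set, since outside it every such $\Lambda_{k,m}(x)$ is nonzero modulo $1$. I expect the main obstacle to be the case $v=0$: there a rational relation among the coefficients persists for \emph{every} $x$ and so cannot be removed by discarding a null set, forcing the use of ergodicity — and the delicate point is verifying that the vector $w$ produced is genuinely nonzero, which is exactly what allows it to be fed into the ergodicity criterion.
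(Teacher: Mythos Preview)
Your proof is correct and follows essentially the same route as the paper's: both expand $k\cdot T^nx$ in the binomial basis, use the ergodicity criterion that $S^Tw=w$ with $w\neq 0$ forces $w\cdot b\notin\Z$, and derive $w\neq 0$ by applying $(N^T)^{r-j_0}$ to isolate the lowest nonzero term. The only cosmetic difference is that the paper fixes upfront the full-measure set of $x$ whose coordinates are rationally independent of those of $b$ and then argues by contradiction, whereas you excise a countable union of explicit null hyperplanes; the underlying computation is identical.
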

\begin{proof}
We write $Tx=Sx+b$ where $b\in \T^{m}$ and $S$ is a unipotent automorphism of $\T^{m}$.
We are going to  prove that the conclusion holds if the coordinates of $x\in \T^{m}$ are rationally independent of the coordinates of $b$, meaning, $x$
satisfies the following property
\begin{equation}\label{E:independent}
\text{if } \, k\cdot b+\ell\cdot x=0 \bmod 1 \ \text{ for some } k,\ell\in \Z^{m}, \text{ then } \ell=0.
\end{equation}
The Lebesgue measure of the set of points $x\in\T^{m}$ with this property is $1$, and the result will follow.

	 We write $S$ also for the $(m\times m)$-matrix defining this automorphism and denote by  $S^t$ the transpose of $S$. Since $S$ is unipotent we have $(S-\id)^{m}=0$ and $(S^t-\id)^{m}=0$.
Let $K$ be the subtorus $(S-\id)\T^{m}$ of $\T^{m}$. By \cite[Theorem~4]{H},  the ergodicity of $T$ is equivalent to the ergodicity of the rotation induced by $b$ on the quotient $\T^{m}/K$, which in turn is equivalent to
\begin{equation}
\label{eq:ergo-affine}
\text{if }\ell\in\Z^{m}\text{ satisfies }(S^t-\id)\ell=0\text{ and }\ell\cdot b=0\bmod 1, \text{ then }\ell=0.
\end{equation}

Let $k\in \Z^m$ be non-zero.
 Since $(S-\id)^{m}=0$, using induction, we get that for every $n\in\N$ we have (by convention $\binom nj=0$ if $j>n$)
\begin{align}
	\notag
	k\cdot T^nx
	& = k\cdot x+ \sum_{j=1}^{m}\binom nj \bigl(k\cdot (S-\id)^jx +k\cdot(S-\id)^{j-1}b\bigr)\\
	\label{eq:induction}
	&= k\cdot x+ \sum_{j=1}^{m_0}\binom nj \bigl( (S^t-\id)^jk\cdot x +(S^t-\id)^{j-1}k\cdot b\bigr)
\end{align}
where $m_0:=\min\{ q\in\N\colon (S^t-\id)^qk=0\}$.

Note that $m_0\leq m$ and that
 the   expression  in \eqref{eq:induction} is a polynomial of degree  at most $m_0$.  We will be done if we show that its non-constant  coefficients  are rationally independent. Arguing by contradiction,  suppose that
   this is not the case.
 Note  that the linear span over $\Q$ of the polynomials $n\mapsto \binom n1$, \dots, $n\mapsto \binom n{m_0}$ is equal to the space of polynomials of degree at most $ m_0$ without constant term, that is, to the linear span over $\Q$ of the polynomials $n,\dots,n^{m_0}$. Therefore, the coefficients in~\eqref{eq:induction} of the polynomials $\binom n1$, \dots, $\binom n{m_0}$ are rationally dependent and there exist integers $r_1,\dots,r_{m_0}\in \Z$, not all of them zero, such that
 $$
 \sum_{j=1}^{m_0} r_j (S^t-\id)^jk\cdot x+ \sum_{j=1}^{m_0} r_j (S^t-\id)^{j-1}k\cdot b=0\bmod 1.
 $$
 The first  sum is a linear combination with integer coefficients of the coordinates of $x$, and the second sum is a linear combination with integer coefficients of the coordinates of $b$. By our  hypothesis~\eqref{E:independent} we have
 $$
 \sum_{j=1}^{m_0}r_j(S^t-\id)^jk=0 \quad
 \text{and} \quad \sum_{j=1}^{m_0} r_j (S^t-\id)^{j-1}k\cdot b=0\bmod 1.
 $$
 We claim  that
 \begin{equation}\label{E:jm0}
 \sum_{j=1}^{m_0} r_j (S^t-\id)^{j-1}k=0.
 \end{equation}
 Indeed, writing $\ell$ for this sum we have $(S^t-\id)\ell=0$ and $\ell\cdot b=0\bmod  1$ and thus $\ell=0$ by~\eqref{eq:ergo-affine}, proving the claim.
 Let $j_0:=\min\{j\colon r_j\neq 0\}$. Applying $(S^t-\id)^{m_0-j_0}$ to both sides of \eqref{E:jm0} and recalling  that $(S^t-\id)^{m_0}k=0$, we obtain $(S^t-\id)^{m_0-1}k=0$, contradicting the definition of $m_0$. This completes the proof.	
\end{proof}
We will now combine the previous two lemmas in order to deduce an equidistribution property that will be crucial in the next subsection.
\begin{lemma}\label{L:TR}
Let $m_1,m_2\in \N$,  and  $  p\in \Z[t]$ be a polynomial  with $\deg(p)\geq 2$. Let $T\colon\T^{m_1}\to \T^{m_1}$ be an ergodic unipotent affine transformation. Then for $m_{\T^{m_1}}$-almost every $x\in \T^{m_1}$, the following holds: For every
sequence $u\colon \N\to \T^{m_2}$  with polynomial coordinates that is equidistributed in $\T^{m_2}$, the sequence $(T^nx, (u\circ p)(n))$ is equidistributed in $\T^{m_1}\times \T^{m_2}$.
\end{lemma}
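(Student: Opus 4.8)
The plan is to establish joint equidistribution directly from the Weyl criterion. First I would fix $x$ in the full measure subset $X_0\subseteq\T^{m_1}$ furnished by Lemma~\ref{L:independent}, noting that this single set will serve for every admissible $u$ at once, since the argument below treats $u$ as an arbitrary equidistributed polynomial sequence. By the Weyl Equidistribution Theorem, to prove that $(T^nx,(u\circ p)(n))$ is equidistributed in $\T^{m_1}\times\T^{m_2}$ it suffices to show, for every non-zero $(k,\ell)\in\Z^{m_1}\times\Z^{m_2}$, that
\[
\frac1N\sum_{n=1}^N\e\bigl(k\cdot T^nx+\ell\cdot(u\circ p)(n)\bigr)\longrightarrow 0.
\]
Setting $P(n):=k\cdot T^nx$ and $Q(n):=\ell\cdot u(n)$, the exponent equals $(P+Q\circ p)(n)$, a real polynomial in $n$; so by the polynomial form of the Weyl criterion recalled before Lemma~\ref{L:PQR}, everything reduces to showing that $P+Q\circ p$ has at least one irrational non-constant coefficient.

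I would then split into two cases. If $k\neq 0$, the polynomial $P$ is non-constant: from the binomial expansion~\eqref{eq:induction} its top coefficient is $(S^t-\id)^{m_0-1}k\cdot b$, which is non-zero modulo $1$ by the ergodicity criterion~\eqref{eq:ergo-affine}, exactly as in the proof of Lemma~\ref{L:independent} (this small point is needed because rational independence of the non-constant coefficients is otherwise a vacuous statement). Moreover, by Lemma~\ref{L:independent} the non-constant coefficients of $P$ are rationally independent. Hence $P$ and $Q=\ell\cdot u$ satisfy the hypotheses of Lemma~\ref{L:PQR} (there $Q$ is an arbitrary real polynomial), and that lemma yields that $P+Q\circ p$ has an irrational non-constant coefficient. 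This is precisely the step where $\deg(p)\geq 2$ is genuinely used, in accordance with the remark following Lemma~\ref{L:PQR}.

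The remaining case $k=0$, $\ell\neq 0$ must be handled separately, since then $P\equiv 0$ is constant and Lemma~\ref{L:PQR} does not apply; this is the one place where the statement cannot simply be deferred to the previous lemma. Here the equidistribution of $u$ guarantees, again by the Weyl criterion, that $Q=\ell\cdot u$ has an irrational non-constant coefficient, and I would show directly that the same holds for $Q\circ p$ by induction on $d:=\deg Q$. If the leading coefficient $c_d$ of $Q$ is irrational, then so is the leading coefficient $c_d\,a^d$ of $Q\circ p$, where $a\in\Z\setminus\{0\}$ is the leading coefficient of $p$; if $c_d$ is rational, then since $p\in\Z[t]$ the polynomial $c_d\,p^d$ has rational coefficients, so passing from $Q-c_dt^d$ to $Q\circ p$ alters coefficients only by rationals, and the induction hypothesis (applied to $Q-c_dt^d$, which still carries an irrational non-constant coefficient) finishes the case. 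Combining the two cases verifies the Weyl criterion and hence proves the lemma. I expect the $k\neq 0$ case to be the conceptual heart of the argument, being where the rational independence from Lemma~\ref{L:independent} feeds into Lemma~\ref{L:PQR}; the case $k=0$ is elementary but genuinely outside the scope of Lemma~\ref{L:PQR}, and so requires the separate inductive argument above.
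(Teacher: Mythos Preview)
Your proposal is correct and follows essentially the same route as the paper: fix the full measure set from Lemma~\ref{L:independent}, apply the Weyl criterion, split into the cases $k\neq 0$ (handled via Lemma~\ref{L:PQR}) and $k=0$ (handled by a direct argument showing $Q\circ p$ inherits an irrational non-constant coefficient). Your inductive treatment of the case $k=0$ is a cosmetic variant of the paper's decomposition $Q(t)=t^{d+1}R_1(t)+R_2(t)$ with $R_1\in\Q[t]$ and $R_2$ having irrational leading coefficient, and your explicit verification that $P$ is non-constant (via the ergodicity criterion~\eqref{eq:ergo-affine} applied to $(S^t-\id)^{m_0-1}k$) is a welcome clarification that the paper leaves implicit.
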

\begin{proof}
Let $X_0$ be the full measure subset of $\T^{m_1}$  given by Lemma~\ref{L:independent}. We are going to  prove that the conclusion holds if  $x\in X_0$.

Note that the sequence $(T^nx, u(p(n)))$ with values in $\T^{m_1+m_2}$ has polynomial coordinates.  By the Weyl Equidistribution Theorem it suffices to show that if $k_1\in\Z^{m_1}$ and  $k_2\in\Z^{m_2}$ are not both zero, then the polynomial
\begin{equation}\label{E:up}
n\mapsto k_1\cdot T^nx+k_2\cdot u(p(n))
\end{equation}
has at least one non-constant irrational coefficient.

Suppose first that $k_1=0$, in which case we have  $k_2\neq 0$. Since the sequence $(u(n))$ is equidistributed in  $\T^{m_2}$, the polynomial $k_2\cdot u(t)$ has at least one irrational non-constant coefficient. We write this polynomial as  $t^{d+1}R_1(t)+R_2(t)$ where $R_1(t)$ has rational coefficients, $d:=\deg(R_2)$, and the leading coefficient of $R_2$ is irrational. Then
$$
k_2\cdot (u\circ p)(t)=p(t)^{d+1}(R_1\circ p)(t)+(R_2\circ p)(t).
$$
The first of these polynomials has rational coefficients and the second one has an irrational leading coefficient. Therefore,
the polynomial $k_2\cdot u\circ p$ has at least one non-constant irrational coefficient and we are done.

Suppose now that  $k_1\neq 0$. Recall that for $x\in X_0$ the non-constant coefficients of the polynomial  $n\mapsto k_1\cdot T^nx$
are rationally independent.
 Using Lemma~\ref{L:PQR}, we get that the polynomial in \eqref{E:up}
has an irrational non-constant coefficient and we are done.
This completes the proof.
\end{proof}

\subsection{A nil-equidistribution result}
We proceed now to establish the key equidistribution property on nilmanifolds needed for our purposes.

\begin{proposition}
\label{P:mainnil}
Let $(X=G/\Gamma,m_X,T)$ be an ergodic nilsystem.
Then there exists a subset $X_0$ of $X$ with $m_X(X_0)=1$ such that the following holds: For every $x\in X_0$, every nilsequence
$(\Psi(n))$, polynomial $p\in\Z[t]$ with $\deg(p)\geq 2$,  and function $f\in \CC(X)$ with $\E_\mu(f|\Krat(T))=0$, we have
	\begin{equation}\label{E:nilmain}
		\lim_{N\to\infty} \frac{1}{N}\sum_{n=1}^N f(T^nx)\cdot \Psi(p(n))=0.
	\end{equation}
\end{proposition}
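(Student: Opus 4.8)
The plan is to write the averages in \eqref{E:nilmain} as ergodic averages of a polynomial orbit on a product nilmanifold, reduce their equidistribution to the maximal torus via \cite[Theorem~2.17]{L}, and there invoke the decoupling supplied by Lemma~\ref{L:TR}. First I would reduce to a basic nilsequence $\Psi(n)=\Phi(b^n\cdot z)$ with $\Phi\in\CC(Z)$, $b\in H$, $z\in Z=H/\Lambda$: since the averaging operators are uniformly bounded, it suffices to prove \eqref{E:nilmain} for basic nilsequences, provided the exceptional set is independent of the nilsequence, a uniform limit then handling all nilsequences. To secure this uniformity I would take $X_0$ to be the preimage, under the factor map of $X$ onto its maximal torus factor $\T^{m_1}$, of the full measure set furnished by Lemma~\ref{L:independent} for the ergodic rotation that $T$ induces on $\T^{m_1}$; this set depends only on $(X,T)$, hence is uniform in $\Psi$, $p$, and $f$.

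Assume first that $X$ and $Z$ are connected, and fix $x\in X_0$. Then $n\mapsto(T^nx,\ b^{p(n)}\cdot z)$ is a polynomial orbit on the nilmanifold $X\times Z$, so by \cite[Theorem~2.17]{L} its equidistribution in its orbit closure is governed by the induced orbit on the maximal torus $\T^{m_1}\times\T^{m_2}$. On that torus the $X$-coordinate is the degree one orbit of the ergodic rotation induced by $T$ (a unipotent affine transformation), while the $Z$-coordinate is a polynomial orbit of degree $\deg(p)\ge2$. For $x\in X_0$, Lemma~\ref{L:TR}, which rests on Lemmas~\ref{L:PQR} and~\ref{L:independent}, shows that these decouple: the torus orbit equidistributes in the product of $\T^{m_1}$ with the orbit closure of the $Z$-coordinate. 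As this product is the maximal torus of $X\times Z'$, where $Z'$ is the orbit closure of $(b^{p(n)}\cdot z)$ in $Z$, a second application of \cite[Theorem~2.17]{L} gives that $(T^nx,\ b^{p(n)}\cdot z)$ equidistributes in the product nilmanifold $X\times Z'$. Consequently
\[
\lim_{N\to\infty}\frac1N\sum_{n=1}^N f(T^nx)\,\Phi(b^{p(n)}\cdot z)=\int_X f\,dm_X\cdot\int_{Z'}\Phi\,dm_{Z'}=0,
\]
since $\E_{m_X}(f|\Krat(T))=0$ and $\Krat(T)$ is trivial for a connected ergodic nilsystem (an ergodic rotation on a connected torus has no nontrivial rational eigenvalue), so that $\int_X f\,dm_X=0$.

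The main obstacle is that an ergodic nilsystem may be disconnected, and its finite part carries exactly the rational eigenvalues, that is, $\Krat(T)$. On this finite factor the residues of $n$ and of $p(n)$ can resonate, the decoupling of Lemma~\ref{L:TR} (valid for connected tori, equivalently for totally ergodic unipotent affine maps) fails, and the orbit closure becomes a proper skew sub-nilmanifold of $X\times Z'$ rather than the product; this is precisely why one must assume $\E(f|\Krat(T))=0$ rather than merely $\int_X f=0$. I would dispose of this by trivializing the finite parts first: choosing a common period $r$ for the connected component structures of $X$ and of $Z$, I split the average according to $n\equiv s\pmod r$ and write $n=rm+s$. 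Then $T^{rm+s}x$ stays in a single connected component of $X$, on which $T^r$ is an ergodic nilsystem with connected phase space, and $\Phi(b^{p(rm+s)}\cdot z)=\Psi'(q(m))$ with $q(m)=p(rm+s)$ of degree $\ge2$ and $\Psi'$ a basic nilsequence. The hypothesis $\E(f|\Krat(T))=0$ is exactly the statement that $f$ integrates to zero over each connected component, so the connected case applies to every residue class and each partial average tends to $0$; summing over $s$ completes the proof. The delicate points are choosing $r$ so as to trivialize both finite parts at once and checking that $\E(f|\Krat(T))=0$ corresponds to the vanishing of these component integrals, the arithmetic behind the surviving irrational linear coefficient being furnished by Lemma~\ref{L:PQR}.
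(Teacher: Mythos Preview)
Your approach is essentially the paper's: reduce to basic nilsequences, project via \cite[Theorem~2.17]{L} to the affine torus, invoke Lemma~\ref{L:TR} there, and handle disconnectedness by passing to components. Two imprecisions deserve attention. First, the relevant factor is the affine torus $A_X=X/[G_0,G_0]$, on which $T$ induces a genuine unipotent affine map, not a rotation; the projected $X$-orbit is a polynomial sequence of degree up to $\dim A_X$, not ``degree one,'' and the decoupling is not a degree comparison but exactly the content of Lemma~\ref{L:PQR}. Second, and more importantly, your set $X_0$ must work for every power $T^r$ that arises when you split by residues, yet the ``common period'' $r$ you choose depends on the nilsequence side $Z$; so $X_0$, defined via Lemma~\ref{L:independent} for $T$ alone, is not obviously uniform in $\Psi$. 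The paper handles this by setting $X':=\bigcap_{r\in\N}\bigcap_{j=0}^{r-1}T^j\pi_X^{-1}(B_r)$, where $B_r$ is the Lemma~\ref{L:TR} good set for $\wt T^r$, and by treating the disconnectedness of $X$ (whose period is fixed, independent of $\Psi$) separately from that of the nilsequence side (whose period varies with $\Psi$). With these adjustments your outline becomes the paper's proof.
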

\begin{remark}
It is important that the set of good $x\in X$ for which the conclusion holds is independent of the nilsequence $(\Psi(n))$.
\end{remark}
\begin{proof}
The proof is similar to the proof of~\cite[Lemma~7.6]{CFH}. It is based on an equidistribution result of Leibman~\cite[Theorem~2.17]{L} that in some cases enables us to deduce an equidistribution result for general nilsystems from the special case of unipotent affine transformations on the torus. We give a rather sketchy account below, the reader will find more details about the notions and results used in \cite[Section~7]{CFH}.

We first consider the case where $X$ is connected. In this case the rational Kronecker factor $\Krat$ of $X$ is trivial 
and $\int f\,dm_X=0$.

Let $G_0$ be the connected component of $G$ and  $A_X:=X/[G_0,G_0]$ be the affine torus of $X$. We denote by  $\wt{T}$  the transformation induced by $T$ on $A_X$.
Then $\wt{T}$ is a unipotent affine transformation. 
 For $r\in\N$ let $B_r$ be the subset of full measure of $A_X$ associated by Lemma~\ref{L:TR} to the ergodic transformation $\wt{T}^r$ of $A_X$ and let
$$ X':=\bigcap_{r\in\N}\bigcap_{j=0}^{r-1} T^j\pi_{X}\inv(B_r)
$$
where $\pi_{X}\colon X\to A_X$ is the natural projection.
 We claim that this subset of full measure of $X$ satisfies the required property.

Note that it suffices to consider the case where $(\Psi(n))$ is a basic nilsequence. We write $\Psi(n)=\psi(b^n\cdot y)$ for some nilmanifold $Y=H/\Lambda$ with Haar measure $m_Y$, $b\in H$, $\psi\in C(Y)$, and $y\in Y$. We can assume that $\{b^n\cdot y\colon n\in\N\}$  is dense in $Y$, 
and thus that the translation by $b$ on $Y$ is ergodic.

For the moment we also assume  that $Y$ is connected. Let $A_Y:=H/[H_0,H_0]$ be the affine torus of $Y$, $\pi_{Y}\colon Y\to A_Y$ the natural projection, and $u(n):= \pi_{Y}(b^n\cdot y)$ for $n\in\N$.  Then the sequence $(u(n))$ is a polynomial sequence 
in $A_Y$ and is equidistributed in $A_Y$.
Let $x\in X'$ and $z:=\pi_{Y}(x)$. By the definition of the set $X'$,  the sequence $(\wt{T}^nz,u(p(n)))$ is equidistributed in $A_X\times A_Y$. But $A_X\times A_Y$ is the affine torus of the connected nilmanifold $X\times Y$ and  the sequence $(\wt{T}^nz,u(p(n)))$ is the image under the quotient map $\pi_X\times\pi_Y$ of the polynomial  sequence $(T^nx,b^{p(n)}\cdot y)$ in $X\times Y$.  By~\cite[Chapter~14, Theorem~20]{HK-book} (originally established in \cite[Theorem~2.17]{L}), this sequence is equidistributed in $X\times Y$ and thus
$$
\lim_{N\to\infty}\frac{1}{N}\sum_{n=1}^Nf(T^nx)\cdot \Psi(p(n))=\lim_{N\to\infty}\frac{1}{N}\sum_{n=1}^Nf(T^nx)\cdot \psi(b^{p(n)}y)=
\int _X f\,dm_X\cdot\int_Y\psi\,dm_Y=0
$$
and we are done.

We continue to assume that $X$ is connected but we no longer  assume that $Y$ is connected. Let $Y_0$ be the connected component of $y$ in $Y$. Then there exists $r\in\N$ such that $Y_0$ is invariant and ergodic under the translation by $b^r$ and  the sets $b^j\cdot Y_0$, $j=0,\ldots, r-1$, form a partition of $Y$. 
The preceding case applied with $x$ replaced by $T^jx$, $T$ replaced by $T^r$, $Y$ replaced by $Y_0$,
 and the polynomial $p$ replaced by $p(rn+j)$, gives
$$
\lim_{N\to\infty} \frac{1}{N}\sum_{n=1}^Nf(T^{rn+j}x)\cdot \Psi(p(rn+j))=0 \quad \text{for }j=0,\dots,r-1\text{ and  }x\in X'.
$$
Averaging for $j\in\{0,\dots,r-1\}$  gives the announced result.

We consider now the case where $X$ is not connected. Then  there exist an integer $r\in\N$ and a connected subnilmanifold $X_0$ of $X$, such that the sets  $T^jX_0$, $j=0,\ldots, r-1$, form a partition of $X$, $X_0$ is invariant under $T^r$, and $(T^jX_0,T^r)$ is ergodic for $j=0,\ldots, r-1$. Writing $m_{X,j}$ for the Haar measure of $T^jX_0$,  we have $\int f\,dm_{X,j}=0$ for $j=0,\ldots, r-1$ because $\E_\mu(f|\Krat(T))=0$.
 For $j=0,\dots,r-1$, the preceding step provides a subset of full measure $X_j'$ of $T^jX_0$
and it is immediate that the union of these sets fulfills the required conditions. This completes the proof.
\end{proof}
We deduce from the previous result  and the main structural result in \cite{HK1} the following statement that is more convenient for our purposes.

\begin{corollary}
	\label{cor:nilmain}
	Let $(X,\mu,T)$ be an ergodic  system  and  $f\in L^\infty(\mu)$ with $\E_\mu(f|\Krat(T))=0$. Then there exists a subset $X_0$ of $X$ with $\mu(X_0)=1$ such that the following holds:
	For every nilsequence
	$(\Psi(n))$ and  polynomial $p\in\Z[t]$ with $\deg(p)\geq 2$  we have
	\begin{equation}\
		\label{E:nilmain2}
		\lim_{N\to\infty} \frac{1}{N}\sum_{n=1}^N f(T^nx)\cdot \Psi(p(n))=0.
	\end{equation}
\end{corollary}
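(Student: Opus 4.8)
The plan is to reduce the general ergodic system to the nilsystem case covered by Proposition~\ref{P:mainnil}, splitting $f$ according to its behaviour relative to the factor $\CZ_\infty(T)$. First I would record that for any nilsequence $(\Psi(n))$ and any $p\in\Z[t]$ the sequence $(\Psi(p(n)))$ is again a nilsequence: a basic nilsequence $n\mapsto\psi(b^n\cdot y)$ is sent to $n\mapsto\psi(b^{p(n)}\cdot y)$, which is a nilsequence by the polynomial orbit results of Leibman already invoked through \cite[Proposition~3.14]{L}, and uniform limits of such composites remain nilsequences. Writing $f=f_1+f_2$ with $f_1:=\E_\mu(f|\CZ_\infty(T))$ and $f_2:=f-f_1$, the component $f_2$ satisfies $\E_\mu(f_2|\CZ_\infty(T))=0$, so Theorem~\ref{th:WW} supplies a full measure set $X_2$, \emph{independent of the nilsequence}, on which $\frac1N\sum_{n\le N}\Psi'(n)f_2(T^nx)\to0$ for every nilsequence $\Psi'$; applying this with $\Psi'=\Psi\circ p$ disposes of the $f_2$ contribution uniformly over all $\Psi$ and all $p$.

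It then remains to treat $f_1$, which is $\CZ_\infty(T)$-measurable and, since $\Krat(T)\subseteq\CZ_\infty(T)$, still satisfies $\E_\mu(f_1|\Krat(T))=0$. Using $\CZ_\infty(T)=\bigvee_k\CZ_k(T)$ and the Structure Theorem of \cite{HK1} (ergodic systems of order $k$ are inverse limits of ergodic $k$-step nilsystems), I would approximate $f_1$ in $L^2(\mu)$ to within $\ve$ by a function $F\circ\pi$, where $\pi\colon(X,\mu,T)\to(X'=G/\Gamma,m_{X'},T')$ is a factor map onto an ergodic nilsystem and $F\in\CC(X')$. The orthogonality to the rational Kronecker factor then transfers: rational eigenfunctions of $T'$ pull back to rational eigenfunctions of $T$, so $\pi\inv(\Krat(T'))\subseteq\Krat(T)$, and hence, by the tower property and contractivity of conditional expectation together with $\E_\mu(f_1|\Krat(T))=0$, $\norm{\E_{m_{X'}}(F|\Krat(T'))}_{L^2(m_{X'})}=\norm{\E_{m_{X'}}(F|\Krat(T'))\circ\pi}_{L^2(\mu)}\le\norm{\E_\mu(F\circ\pi|\Krat(T))}_{L^2(\mu)}\le\norm{F\circ\pi-f_1}_{L^2(\mu)}<\ve$. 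Replacing $F$ by $F-\E_{m_{X'}}(F|\Krat(T'))$ and re-approximating by a continuous function, I may assume $F\in\CC(X')$ with $\E_{m_{X'}}(F|\Krat(T'))=0$ and $\norm{F\circ\pi-f_1}_{L^2(\mu)}<2\ve$.

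Now Proposition~\ref{P:mainnil}, applied to $(X',m_{X'},T')$ and $F$, yields a full measure set $X_0'\subseteq X'$, independent of $\Psi$ and $p$, on which $\frac1N\sum_{n\le N}F(T'^nx')\Psi(p(n))\to0$; pulling back along $\pi$ and using $F(T'^n\pi(x))=(F\circ\pi)(T^nx)$ gives the same for $F\circ\pi$ on $\pi\inv(X_0')$. To pass from the approximants to $f_1$ itself with a single exceptional null set, I would take $\ve=\ve_j\to0$, obtain approximants $g_j=F_j\circ\pi_j$, and intersect the pulled-back sets with the full measure sets on which the pointwise ergodic theorem gives $\frac1N\sum_{n\le N}|f_1-g_j|(T^nx)\to\norm{f_1-g_j}_{L^1(\mu)}$. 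On the resulting full measure set, for any nilsequence $\Psi$ and any admissible $p$, the triangle inequality bounds $\limsup_N|\frac1N\sum_{n\le N}f_1(T^nx)\Psi(p(n))|$ by $\norm{\Psi}_\infty\norm{f_1-g_j}_{L^1(\mu)}$, which tends to $0$ as $j\to\infty$. Intersecting with $X_2$ finishes the proof. The main obstacle I anticipate is organising this last diagonal argument so that the null set is fixed once and for all, independently of $\Psi$ and $p$; the transfer of $\Krat$-orthogonality through the factor map and the fact that $\Psi\circ p$ is a nilsequence are the remaining points requiring care, but both are routine given the cited results.
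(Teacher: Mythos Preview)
Your proposal is correct and follows essentially the same path as the paper's proof: reduce to $\CZ_\infty(T)$-measurable $f$ via the second part of Theorem~\ref{th:WW} (implicitly using, as you make explicit, that $\Psi\circ p$ is again a nilsequence), approximate by continuous functions on nilsystem factors with zero $\Krat$-projection, apply Proposition~\ref{P:mainnil}, and pass to the limit via the pointwise ergodic theorem and a countable intersection of full-measure sets. One small remark: your ``re-approximating by a continuous function'' after subtracting $\E_{m_{X'}}(F|\Krat(T'))$ is unnecessary and, if taken literally, would spoil the zero-$\Krat$ condition you need for Proposition~\ref{P:mainnil}; for an ergodic nilsystem $\Krat(T')$ is the finite factor onto the connected components of $X'$, so $\E_{m_{X'}}(F|\Krat(T'))$ is locally constant and $F-\E_{m_{X'}}(F|\Krat(T'))$ is already continuous (the paper performs exactly this substitution without comment).
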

\begin{proof}
	By the second part of Theorem~\ref{th:WW} we can assume that $(X,\mu,T)$ is a system of infinite order and using an approximation argument we can assume that it is a system of finite order.
	By the Structure Theorem~\cite[Chapter~18, Theorem~1]{HK-book} (originally established in \cite{HK1}), the  system $(X,\mu,T)$ is an inverse limit of a sequence $((X_j,\mu_j,T))$ of ergodic $k$-step nilsystems, with factor maps $\pi_j\colon X\to X_j$, $j \in \N$. For every $j\in\N$  there exists $f_j\in\CC(X_j)$ such that $\norm{f-f_j\circ\pi_j}_{L^1(\mu)}\to 0$ when $j\to\infty$.
	For $j\in \N$, substituting $\tilde{f}_j:=f_j-\E_{\mu_j}(f_j|\Krat(X_j,\mu_j,T))$ for $f_j$ and noting that  $\norm{f-\tilde{f}_j\circ\pi_j}_{L^1(\mu)}\to 0$ since $\E_{\mu_j}(f|\Krat(X_j,\mu_j,T))=0$, we can assume that $\E_{\mu_j}(f_j|\Krat(X_j,\mu_j,T))=0$.
	By the ergodic theorem, for $j\in \N$  there exists a subset $A_j$ of $X$ of  full $\mu$-measure  such that
	$$
	\lim_{N\to\infty}
	\frac{1}{N}\sum_{n=1}^N |f(T^nx)-f_j(\pi_j(T^nx))|=\norm{f-f_j\circ\pi_j}_{L^1(\mu)} \quad \text{for every }x\in A_j.
	$$
	For $j\in \N$,  let $B_j$ be the subset of full measure of $X_j$ associated by Proposition~\ref{P:mainnil} to the nilsystem $(X_j,\mu_j,T)$. Then  for every nilsequence $\Psi$ and every polynomial $p\in\Z[t]$ with $\deg(p)\geq 2$, we have
	$$
	\lim_{N\to\infty}
	\frac{1}{N}\sum_{n=1}^N f_j(T^nx)\cdot  \Psi(p(n))=0 \quad \text{for every }x\in B_j, j\in \N.$$
	Let $X_0:=\bigcap_{j\in\N} (A_j\cap \pi_j\inv(B_j))$. Then  $\mu(X_0)=1$ and every $x\in X_0$ satisfies the announced property. This completes the proof.
\end{proof}

\subsection{The rational Kronecker is characteristic}
We are now ready to verify Theorem~\ref{prop:Rationa-Kro}. We restate it  for convenience.
\begin{theorem*}
Let $T,S$ be measure preserving transformations acting on a probability space $(X,\CX,\mu)$  such that the system $(X,\mu,T)$ has zero entropy.  Let also   $p\in \Z[t]$ be a polynomial with $\deg(p)\geq 2$ and
 $f,g\in L^\infty(\mu)$ be such  that $\E_\mu(f|\Krat(T))=0$ or  $\E_\mu(g|\Krat(S))=0$. Then
$$
\lim_{N\to\infty} \frac{1}{N}\sum_{n=1}^NT^nf \cdot  S^{p(n)}g =0
$$
in $L^2(\mu)$.
\end{theorem*}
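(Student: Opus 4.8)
The plan is to combine the convergence statement of Proposition~\ref{prop:main} with the nil-equidistribution of Proposition~\ref{P:mainnil}, after reducing both functions to the structured factors $\CZ_k$. First I would note that by Proposition~\ref{prop:main} (which is where the zero-entropy hypothesis on $T$ is spent) the $L^2$-limit is unchanged if $f$ is replaced by $\E_\mu(f|\CZ_\infty(T))$ or $g$ by $\E_\mu(g|\CZ_\infty(S))$, the discarded parts contributing $0$. Since $\Krat(T)\subseteq\CZ_1(T)\subseteq\CZ_k(T)\subseteq\CZ_\infty(T)$ and likewise for $S$, the tower property shows these projections preserve whichever of $\E_\mu(f|\Krat(T))=0$ or $\E_\mu(g|\Krat(S))=0$ is assumed. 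Approximating further by $f_k:=\E_\mu(f|\CZ_k(T))$ and $g_k:=\E_\mu(g|\CZ_k(S))$ (conditional expectation is a contraction of $L^\infty$ that again preserves the $\Krat$-hypotheses), controlling the error by $\norm{g}_\infty\norm{f-f_k}_{L^1(\mu)}+\norm{f}_\infty\norm{g-g_k}_{L^1(\mu)}$, and using that the limit exists, I may assume $f$ is $\CZ_k(T)$-measurable and $g$ is $\CZ_k(S)$-measurable for a single $k$. By~\cite[Proposition~3.1]{CFH} applied on each side I may finally assume $(f(T^nx))$ and $(g(S^nx))$ are $k$-step nilsequences for a.e.\ $x$, so that $(g(S^{p(n)}x))$ is an $\ell$-step nilsequence for a.e.\ $x$ by~\cite[Proposition~3.14]{L}.

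If $\E_\mu(f|\Krat(T))=0$, I would put $\Psi_x(m):=g(S^mx)$, a nilsequence for a.e.\ $x$, and apply Corollary~\ref{cor:nilmain} to the $T$-system. Passing to the ergodic decomposition $\mu=\int\mu_\omega\,d\mu(\omega)$ of $T$ — under which $\E_\mu(f|\Krat(T))=0$ descends to $\E_{\mu_\omega}(f|\Krat(X,\mu_\omega,T))=0$ for a.e.\ $\omega$, as in Section~\ref{SS:RK} — the corollary gives on almost every component a full-measure set of good $x$ that is independent of the nilsequence; a standard measurability argument across the decomposition then produces a $\mu$-full set on which $\frac1N\sum_{n=1}^N f(T^nx)\,\Psi_x(p(n))\to0$. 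Bounded convergence upgrades this to vanishing of the $L^2$-limit. This case does not itself use the zero entropy of $T$, which was already spent in the reduction.

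The case $\E_\mu(g|\Krat(S))=0$ is the genuinely different one, and is where I expect the difficulty to lie: now the vanishing–rational-Kronecker hypothesis sits on the factor composed with $p(n)$, whereas Proposition~\ref{P:mainnil} and Corollary~\ref{cor:nilmain} place it on the plain iterate. Here I would also reduce $f$ to a nilsequence (automatic once $f$ is $\CZ_k(T)$-measurable, $\CZ_k(T)$ being a factor of order $k$), arrange — by subtracting $\E_\mu(\tilde g|\Krat(S))$, whose $L^1$-norm is at most the approximation error since $\E_\mu(g|\Krat(S))=0$ — that the $S$-nilsequence has vanishing rational-Kronecker part, and then invoke the symmetric analogue of Corollary~\ref{cor:nilmain}. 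What makes this analogue true is that the proof of Proposition~\ref{P:mainnil}, through Lemma~\ref{L:TR}, in fact establishes full equidistribution of $(T^nx,u(p(n)))$ in the product of the two affine tori, with the good-point condition falling on the plain-$n$ ($T$) coordinate as dictated by Lemma~\ref{L:independent}; lifting by Leibman's theorem then yields a limit equal to $\int_X f\,dm_X\cdot\int_Y\psi\,dm_Y$ on matching connected components — an expression symmetric in the two factors that vanishes as soon as the $S$-integral vanishes on each connected component, i.e.\ exactly when $\E_\mu(g|\Krat(S))=0$. The non-connectedness of both factors is absorbed by the partitioning-and-reindexing device $p(rn+j)\equiv p(j)\bmod r$ used in Proposition~\ref{P:mainnil}, with $\Krat(S)$ controlling the cyclic partition of the $S$-factor; the most delicate bookkeeping will be to run the two partitions simultaneously while keeping $p$ of degree $\geq 2$.

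In either case bounded convergence turns the pointwise a.e.\ vanishing into vanishing of the limit in $L^2(\mu)$, which is the assertion.
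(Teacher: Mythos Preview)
Your handling of Case~1 ($\E_\mu(f|\Krat(T))=0$) matches the paper's proof: reduce to $\CZ_k$ via Proposition~\ref{prop:main}, make $(g(S^nx))$ a nilsequence via~\cite[Proposition~3.1]{CFH}, pass to the ergodic decomposition of $T$, apply Corollary~\ref{cor:nilmain} (whose good set is independent of the nilsequence), then bounded convergence.

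For Case~2 ($\E_\mu(g|\Krat(S))=0$) you take a genuinely different and harder route. The paper does not attempt a symmetric equidistribution analysis at all: having established Case~1, it uses it to replace $f$ by $\E_\mu(f|\Krat(T))$, approximates this in $L^2(\mu)$ by finite linear combinations of rational eigenfunctions of $T$, and is reduced to showing $\lim_N\frac{1}{N}\sum_{n=1}^N \e(ns)\,g(S^{p(n)}x)=0$ in $L^2(\mu)$ for fixed $s\in\Q$. Since $\E_\mu(g|\Krat(S))=0$ means the spectral measure of $g$ has no rational point masses, this follows from the spectral theorem for unitary operators together with Weyl and bounded convergence --- no nilpotent input on the $S$-side whatsoever.

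Your symmetric idea is in principle sound, since the proof of Proposition~\ref{P:mainnil} does yield full product equidistribution with the good set on the $T$-side, so the limit factorises and vanishing of the $Y$-integral suffices. But there is a concrete gap in the execution: subtracting $\E_\mu(\tilde g|\Krat(S))$ from $\tilde g$ need not preserve the property that $(\tilde g(S^nx))$ is a nilsequence for a.e.\ $x$, because the conditional expectation onto $\Krat(S)$ is only an $L^2$-limit of $S$-periodic functions, not a uniform one. A repair is to subtract instead $\E_\mu(\tilde g|\CI(S^{r!}))$ for a single large $r$ (this \emph{is} $S$-periodic, hence yields nilsequences) and carry an extra $\epsilon$, or to skip the subtraction and bound the $Y$-integral in the factorised limit directly by $\norm{\E_\mu(\tilde g|\Krat(S))}$. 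Either fix, combined with the simultaneous non-connectedness bookkeeping you correctly flag, is real work; the paper's eigenfunction-plus-spectral shortcut sidesteps all of it.
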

\begin{proof}
Suppose first that $\E_\mu(f|\Krat(T))=0$.
Using Proposition~\ref{prop:main} and  a standard  approximation argument we can assume that there exists $k\in\N$ such that $f$ is measurable
with respect to $\CZ_k(T)$ and $g$ is measurable with respect to $\CZ_k(S)$. Moreover, arguing as in the proof of Proposition~\ref{prop:main} we can assume that there exists a subset $X_0$ of $X$ with $\mu(X_0)=1$ such that the sequence $(g(S^nx))$ is a $k$-step nilsequence for every $x\in X_0$.

Let $\mu=\int\mu_x\,d\mu(x)$ be the ergodic decomposition of $\mu$ with respect to the transformation $T$. Then  $\mu_x(X_0)=1$ for $\mu$-almost every $x\in X$, and as we remarked in Section~\ref{SS:RK}, for $\mu$-almost every $x\in X$ we have $\E_{\mu_x}(f|\Krat(X,\mu_x,T))=0$.
For $x\in X$,  let $X_x$ be the subset of $X$ with $\mu_x(X_x)=1$ associated by Corollary~\ref{cor:nilmain} to
the system $(X,\mu_x,T)$ and to the function $f$. Then for every  $x'\in X_0\cap X_x$ we have (here we use crucially that the set $X_x$  does not dependent on the nilsequence $(g(S^{p(n)}x'))$)
$$
\lim_{N\to\infty} \frac{1}{N}\sum_{n=1}^N f(T^nx') \cdot  g(S^{p(n)}x') =0.
$$
We deduce using  the bounded convergence theorem that for  every $x\in X_0$ we have
$$
\lim_{N\to\infty}\int\Bigl|\frac{1}{N}\sum_{n=1}^N f(T^nx') \cdot  g(S^{p(n)}x')\Bigr|^2\,d\mu_x(x')=0.
$$
Integrating with respect to the measure $\mu$ and using the bounded convergence theorem again, we obtain the required convergence.

Suppose  now that $\E_\mu(g|\Krat(S))=0$. By the first case we can assume that $f$ is measurable with respect to $\Krat(T)$. Then $f$ can be approximated in $L^2(\mu)$ by a finite linear combination of eigenfunctions associated with rational eigenvalues. Therefore, we can assume that  $f$ is an eigenfunction of this type. We are thus reduced to showing that
$$
\lim_{N\to\infty}\frac{1}{N}\sum_{n=1}^N \e(ns)\cdot g(S^{p(n)}x)=0 \  \text{ in }L^2(\mu), \quad \text{for every }s\in\Q.
$$
Since
$\E_\mu(g|\Krat(S))=0$, the spectral measure of  $g$ with respect to the system $(X,\mu,S)$ has no  rational point masses. The asserted mean convergence to zero follows  by combining this fact with the spectral theorem for unitary operators, Weyl's Equidistribution Theorem, and the bounded convergence theorem. This completes the proof.
\end{proof}

\section{Recurrence,  pointwise convergence, and positive entropy}
The goal of this section is to complete the proof of Theorem~\ref{th:recurrence}, Proposition~\ref{th:entropy}, and
Proposition~\ref{th:pointwise}.

\subsection{Proof Theorem~\ref{th:recurrence} (Recurrence)}
Let $\ve>0$. It suffices to prove that there exists $r\in\N$ such that
\begin{equation}
\label{eq:recurrence}
\lim_{N\to\infty}\frac{1}{N}\sum_{n=1}^N\mu(A\cap T^{-rn}A\cap
S^{-p(rn)}A)\geq \mu(A)^3-\ve/2.
\end{equation}
(The limit exists by  Theorem~\ref{th:convergence} applied for the transformation $T^r$ in place of $T$ and polynomial $p(rn)$ in place of $p(n)$.)

Since $\Krat(T)=\bigvee_{d\in\N}\CI(T^{d!})$ and  $\Krat(S)=\bigvee_{d\in\N}\CI(S^{d!})$,
we get that there exists $r\in\N$ such that
$$ \norm{\E_\mu(\one_A|\Krat(T))-\E_\mu(\one_A|\CI(T^r))}_{L^1(\mu)}<\ve/4
$$
and
$$
\norm{\E_\mu(\one_A|\Krat(S))-\E_\mu(\one_A|\CI(S^r))}_{L^1(\mu)}<\ve/4.
$$
 Note that for every $n\in \N$ the function $\E_\mu(\one_A|\CI(T^r))$ is invariant under $T^{rn}$ and the function
$\E_\mu(\one_A|\CI(S^r))$ is invariant under $S^{p(rn)}$ because $p(0)=0$ and thus $p(rn)$ is divisible by $r$.
Therefore, for every $n\in \N$ we have
\begin{multline*}
\Bigl|\int \one_A\cdot T^{rn}\E_\mu(\one_A|\Krat(T))\cdot S^{p(rn)}\E_\mu(\one_A|\Krat(S))\,d\mu
\\
-
\int\one_A\cdot \E_\mu(\one_A|\CI(T^r))\cdot \E_\mu(\one_A|\CI(S^r))\,d\mu\Bigr|\leq \ve/2.
\end{multline*}
By~\cite[Lemma~1.6]{Chu11} the last integral is greater than or equal to $\mu(A)^3$ and thus for every $n\in \N$ we have
$$
\int \one_A\cdot T^{rn}\E_\mu(\one_A|\Krat(T))\cdot S^{p(rn)}\E_\mu(\one_A|\Krat(S))\,d\mu
\geq \mu(A)^3-\ve/2.
$$
 We apply Theorem~\ref{prop:Rationa-Kro}    with $T^r$ in place of $T$ (note that $\Krat(T^r)=\Krat(T)$) and $p(rn)$ in place of $p(n)$. We get that
\begin{multline*}
\lim_{N\to\infty}\frac{1}{N}\sum_{n=1}^N\mu(A\cap T^{-rn}A\cap
S^{-p(rn)}A)=\\
\lim_{N\to\infty}\frac{1}{N}\sum_{n=1}^N
\int \one_A\cdot T^{rn}\E_\mu(\one_A|\Krat(T))\cdot S^{p(rn)}\E_\mu(\one_A|\Krat(S))\,d\mu.
\end{multline*}
Combining the above we get the  bound~\eqref{eq:recurrence}. This completes the proof. \qed

\subsection{Proof of Proposition~\ref{th:entropy} (Positive entropy)}
Let $h$ be the entropy of the system $(X,\mu,T)$. Let $\wt T$ be the shift on the sequence space $\wt X=\{0,1,2\}^\Z$ and let $\wt \mu$ be the
$(s,s,1-2s)$-Bernoulli measure on $\wt X$, where $s>0$ is small enough so that the entropy of the Bernoulli shift $(\wt X,\wt \mu,\wt T)$ is smaller than $h$. It follows from Sinai's factor theorem~\cite[Theorem 20.13]{G} 
 that $(\wt X,\wt\mu,\wt T)$ is a factor of $(X,\mu,T)$. Let  $\pi\colon X\to \wt X$ be the factor map.

We claim that it suffices to construct a measure preserving transformation
$\wt S$ of $\wt X$ and a subset $\wt A$ of $\wt X$ such that
$$
\wt\mu(\wt T^{-a(n)}\wt A\cap \wt S^{-b(n)}\wt A)=\begin{cases}
	0 &\text{if }n\in F, \\
	s^2&\text{ if }n\notin F.
\end{cases}
$$
Indeed, since $( X,\mu, T)$ is ergodic, it follows from Rohlin's skew-product theorem~\cite[Theorem 3.18]{G} that there exist a probability space $(Z,\nu)$ and an isomorphism of probability spaces $\Phi$ from $(\wt X\times Z,\wt\mu\times\nu)$ to $(X,\mu)$  such that $\pi(\Phi(\wt x,z))=\wt x$ for $(\wt\mu\times\nu)$-almost every $(\wt x,z)$.  We let $A:=\pi\inv(\wt A)$ and $S:=\Phi\circ(\wt S\times \id)\circ\Phi\inv$. Then  $S$ is a measure preserving transformation of $(X,\mu)$, and the required properties hold.

Therefore, we can restrict to the case where $(X,\mu,T)$ is the $(s,s,1-2s)$-Bernoulli shift on $X=\{0,1,2\}^\Z$.  We argue as in the proof of  \cite[Lemma~4.1]{FLW11}, with small changes. Given a permutation $\pi$ of $\Z$ with $\pi(0)=0$ we define
the map $\psi_\pi\colon X\to X$ by
$$
(\psi_\pi x)(n):=
\begin{cases}
	x(0) \quad  &\text{if }  n=0, \\
1-x(\pi(n)) \quad & \text{if } n\neq 0 \text{ and } x(\pi(n))\in \{0,1\}, \\
2 \quad & \text{if } n\neq 0 \text{ and } x(\pi(n))=2.
\end{cases}
$$
It is easy to verify that $\psi_\pi$ is  measure preserving, invertible, with  $(\psi_\pi)^{-1}=\psi_{\pi^{-1}}$, and $(\psi_{\pi^{-1}}x)(0)=x(0)$. We define the measure preserving transformation $S\colon X\to X$ by
$$
S:=\psi_\pi^{-1}T\psi_\pi.
$$
Then  for $n\in \N$ we have
$$
(S^nx)(0)=(T^n\psi_\pi x)(0)=(\psi_\pi x)(n)=\begin{cases}
	1-x(\pi(n))  \qquad & \text{if }  x(\pi(n))\in \{0,1\}, \\
2 \quad & \text{if }  x(\pi(n))=2.	
\end{cases}
$$
Hence, if $A:=\{x\in X\colon x(0)=1\}$ we have
$$
T^{-a(n)} A\cap S^{-b(n)}A=\{x\in X\colon x(a(n))=1,  x(\pi(b(n)))=0 \}, \qquad n\in\N.
$$
We now choose the permutation $\pi$. Since the sequences $a,b\colon \N\to \Z\setminus\{0\}$ are injective and miss infinitely many integers, we can choose $\pi$ that fixes $0$ such that  $\pi(b(n))=a(n)$ if $n\in F$ and $\pi(b(n))\neq a(n)$ if $n\notin F$. Then
$$
\mu(T^{-a(n)} A\cap S^{-b(n)}A)=
\begin{cases}
	0 \qquad & \text{if }  n\in F, \\
	s^2 \quad & \text{if }  n\notin F.	
\end{cases}
$$
This completes the proof.\qed

\subsection{Proof of Proposition~\ref{th:pointwise} (Pointwise convergence)}
We can assume that  $f,g\in L^\infty(\mu)$ are real valued. We first establish existence $\mu$-almost everywhere of the limit
\begin{equation}
\label{eq:pointwise}
\lim_{N\to\infty}\frac{1}{N}\sum_{n=1}^N f(T^nx)\cdot g(S^{p(n)}x).
\end{equation}
 By Proposition~\ref{P:ergodic}, for almost every $x\in X$ the sequence $(f(T^{n}x))$
admits correlations on the sequence of intervals $([N])_{N\in\N}$ and the corresponding Furstenberg system has zero entropy.
By our assumption, for almost every $x\in X$ the sequence $(g(S^{p(n)}x))$
admits correlations along the sequence of intervals $([N])_{N\in\N}$ and defines a
(unique) Furstenberg system.
Suppose  that $\E(g| \mathcal{Z}_\infty(S))=0$. Combining  our pointwise convergence assumption of the averages \eqref{E:assumption} with  Theorem~\ref{T:HK} and Lemma~\ref{lem:Pinsker}, we get that for almost every $x\in X$  the  $0^{\text{th}}$-coordinate projection of the Furstenberg system of the sequence $(g(S^{p(n)}x))$ is orthogonal to the Pinsker factor of this system. Hence, using Proposition~\ref{prop:Pinsker} and arguing as in Proposition~\ref{prop:char3}, we deduce that
 the averages~\eqref{eq:pointwise}
converge pointwise almost everywhere to $0$. Hence, in order to  prove pointwise convergence of the averages \eqref{eq:pointwise} we can assume that $g$ is measurable with respect to the   factor $\mathcal{Z}_\infty(S)$.

By Bourgain's  maximal inequality for
ergodic averages with polynomial iterates \cite[Theorem 6]{Bo88}, for $g\in L^2(\mu)$ we have
$$
\Bigl\Vert\sup_{N\in\N} \frac{1}{N}\sum_{n=1}^N S^{p(n)}|g|\Bigr\Vert_{L^2(\mu)}\leq C\norm{g}_{L^2(\mu)}
$$
for some universal constant $C$. Thus for $f\in L^\infty(\mu)$ we have
$$
\Bigl\Vert\sup_{N\in\N} \frac{1}{N}\sum_{n=1}^N |T^nf\cdot S^{p(n)}g|\Bigr\Vert_{L^2(\mu)}\leq C\norm f_{L^\infty(\mu)} \norm g_{L^2(\mu)}.
$$
By the argument used in the classical proof of the pointwise ergodic theorem, it follows that for $f$ fixed, the family of functions $g$ for which the convergence~\eqref{eq:pointwise} holds almost everywhere is closed in $L^2(\mu)$. Therefore, in order to prove the existence almost everywhere of the limit~\eqref{eq:pointwise} for $g$ measurable with respect to $\CZ_\infty(S)$, it suffices to restrict to the case where $g $ is measurable with respect to $\CZ_k(S)$ for some $k\in\N$.

If $g $ is measurable with respect to $\CZ_k(S)$, then the pointwise convergence of the averages \eqref{eq:pointwise} follows  by repeating the  argument used in the proof  for Proposition~\ref{prop:main}.
This finishes the convergence part of the proof.

Since the pointwise limit coincides with the $L^2(\mu)$ limit, the remaining part of the result follows from Theorem~\ref{prop:Rationa-Kro}.  This completes the proof.
\qed

\end{document}